\newtheorem{theorem}{Theorem}[section]
\newtheorem{proposition}[theorem]{Proposition}
\newtheorem{lemma}[theorem]{Lemma}
\newtheorem{corollary}[theorem]{Corollary}
\theoremstyle{definition}
\newtheorem{definition}[theorem]{Definition}
\newtheorem{example}[theorem]{Example}
\newtheorem{conjecture}[theorem]{Conjecture}
\newcommand{\conn}{\ensuremath{\#}} 
\newcommand{\calL}{\ensuremath{\mathcal{L}}} 
\newcommand{\calP}{\ensuremath{\mathcal{P}}} 
\newcommand{\hlgy}[1]{\ensuremath{H_{*}(#1)}}
\newcommand{\cohlgy}[1]{\ensuremath{H^{*}(#1)}}
\newcounter{bean}
\newenvironment{letterlist}{\begin{list}{\rm ({\alph{bean}})}
      {\usecounter{bean}\setlength{\rightmargin}{\leftmargin}}}
      {\end{list}}
\newcommand{\namedright}[3]{\ensuremath{#1\stackrel{#2}
 {\longrightarrow}#3}}
\newcommand{\nameddright}[5]{\ensuremath{#1\stackrel{#2}
 {\longrightarrow}#3\stackrel{#4}{\longrightarrow}#5}}
\newcommand{\larrow}{\relbar\!\!\relbar\!\!\rightarrow}
\newcommand{\llarrow}{\relbar\!\!\relbar\!\!\larrow}
\newcommand{\llnamedright}[3]{\ensuremath{#1\stackrel{#2}
 {\llarrow}#3}}
\newcommand{\qqed}{\hfill\Box}
\begin{document}


\title[Exponential growth] 
   {Exponential growth in the rational homology of free loop spaces and in torsion homotopy groups} 

\author{Ruizhi Huang} 
\address{Institute of Mathematics, Academy of Mathematics and Systems Science, 
   Chinese Academy of Sciences, Beijing 100190, China} 
\email{haungrz@amss.ac.cn} 
 \urladdr{https://sites.google.com/site/hrzsea/}

\author{Stephen Theriault}
\address{School of Mathematics, University of Southampton, Southampton 
   SO17 1BJ, United Kingdom}
\email{S.D.Theriault@soton.ac.uk}

\subjclass[2010]{Primary 55P35, 55P62}
\keywords{exponential growth, free loop space, homotopy exponent, Moore's conjecture}


\begin{abstract} 
Using integral methods we recover and generalize some results by F\'{e}lix, Halperin 
and Thomas on the growth of the rational homology groups of free loop spaces, and 
obtain a new family of spaces whose $p$-torsion in homotopy groups grows exponentially 
and satisfies Moore's Conjecture for all but finitely many primes. In view of the results, we conjecture that there should be a strong connection between exponential growth in the rational homotopy groups and the $p$-torsion homotopy groups for any prime $p$. 
\end{abstract}

\maketitle

\section{Introduction} 
Let $X$ be a simply-connected space. Its \emph{free loop space} $\calL X$ is the 
space ${\rm Map}(S^1,X)$ of continuous maps from $S^{1}$ to $X$ and its 
\emph{based loop space} $\Omega X$ is the space ${\rm Map}_{\ast}(S^{1},X)$ of 
pointed continuous maps from $S^{1}$ to $X$. They are related via a fibration 
\(\nameddright{\Omega X}{}{\calL X}{e}{X}\) 
where $e$ evaluates a loop at the basepoint. This paper is concerned with growth 
in the rational homology of $\calL X$ and growth in the homotopy groups of $\Omega X$. 

Gromov \cite{G} conjectured that when $X$ is a closed manifold then 
$H_\ast(\mathcal{L}X;\mathbb{Q})$ almost always grows exponentially. This has an 
important consequence in Riemannian geometry due to a theorem of Gromov, improved 
by Ballman and Ziller~\cite{G,BZ}, which shows that the rate of growth in the dimension 
of $H_{\ell}(\calL M;\mathbb{Q})$ can be used to give a lower bound on the number of 
geometrically distinct closed geodesics on a simply-connected closed Riemannian 
manifold $M$. 

Vigu\'{e}-Poirrier made a more general conjecture. A finite type space $X$ of finite 
Lusternik-Schnirrelman category is \emph{rationally  elliptic} if the dimension of 
$\pi_{\ast}(X)\otimes\mathbb{Q}$ is finite and is \emph{rationally hyperbolic} otherwise. 
Vigu\'{e}-Poirrier \cite{V} conjectured that if $X$ is rationally hyperbolic then 
$H_\ast(\mathcal{L}X;\mathbb{Q})$ grows exponentially. This conjecture has 
been proved for a finite wedge of spheres \cite{V}, for a non-trivial connected sum 
of closed manifolds \cite{L2} and in the case when $X$ is coformal \cite{L1}. 

To be precise, a graded vector space $V=\{V_i\}_{i\geq 0}$ of finite type {\it grows exponentially} 
if there exist constants $1<C_1<C_2<\infty$ such that for some $K$
\[
C_1^k \leq r_k\leq C_2^k, ~~\ \ \ \ \ ~\quad k\geq K,
\] 
where $r_k=\sum\limits_{i\leq k} {\rm dim} V_i$. The {\it log index} of $V$ is defined by
\[
{\rm log}~{\rm index}(V)=\mathop{{\rm lim}~{\rm sup}}\limits_{i} \frac{{\rm log}~({\rm dim}V_i)}{i}.
\]
Notice that when $V$ grows exponentially $0<{\rm log}~{\rm index}(V)<\infty$.
For a topological space $X$, let
${\rm log}~{\rm index}(\pi_\ast(X))={\rm log}~{\rm index}(\pi_{\geq 2}(X)\otimes \mathbb{Q})$. 
In particular, if $X$ is rationally elliptic then ${\rm log}~{\rm index}(\pi_\ast(X))=-\infty$ 
and if $X$ is rationally hperbolic then ${\rm log}~{\rm index}(\pi_\ast(X))>0$.

F\'{e}lix, Halperin and Thomas \cite{FHT4} introduced a much stronger version of exponential growth. 

\begin{definition}\label{conexpdef}
A graded vector space $V=\{V_i\}_{i\geq 0}$ of finite type has {\it controlled exponential growth} if $0<{\rm log}~{\rm index}(V)<\infty$ and for each $\lambda>1$ there is an infinite sequence $n_1<n_2<\cdots$ such that $n_{i+1}<\lambda n_i$ for $i\geq 0$, and ${\rm dim}V_{n_i}=e^{\alpha_in_i}$ with $\alpha_i\rightarrow {\rm log}~{\rm index}(V)$.
\end{definition}
They then observed that for any simply-connected space $X$ with rational homology of finite type
\begin{equation} 
  \label{logcompare} 
{\rm log}~{\rm index}(H_\ast(\mathcal{L}X;\mathbb{Q}))\leq {\rm log}~{\rm index}(\pi_\ast(X))={\rm log}~{\rm index}(H_\ast(\Omega X;\mathbb{Q})) 
\end{equation} 
which led to a further refinement of exponential growth. 
 
\begin{definition}\label{goodexpdef}
Let $X$ be a simply-connected space with rational homology of finite type such that ${\rm log}~{\rm index}(H_\ast(\Omega X;\mathbb{Q}))\in (0,\infty)$. Then $\mathcal{L}X$ has {\it good exponential growth} if $H_\ast(\mathcal{L}X;\mathbb{Q})$ has controlled exponential growth and 
\[
{\rm log}~{\rm index}(H_\ast(\mathcal{L}X;\mathbb{Q}))={\rm log}~{\rm index}(H_\ast(\Omega X;\mathbb{Q})).
\]
\end{definition}

F\'{e}lix, Halperin and Thomas~\cite{FHT4,FHT5} went on to give conditions that guaranteed 
good exponential growth and provided several families of examples that have this property. This 
was done using a detailed analysis of the relevant Sullivan models. 

In this paper we reformulate and generalize some of F\'{e}lix, Halperin and Thomas' results 
using \emph{integral} methods rather than rational ones, based on recent work in~\cite{BT1,BT2,T}. 
The integral approach also lets us produce results on the growth of torsion in $\pi_{\ast}(X)$ 
for all but finitely many primes, which relates to Moore's Conjecture. 

Let $p$ be a prime. The \emph{$p$-primary homotopy exponent} of a space $X$ is the least power 
of $p$ that annihilates the $p$-torsion in $\pi_{\ast}(X)$. If this power is $r$ write $\exp_{p}(X)=p^{r}$, 
and if no such power exists we say $X$ has no homotopy exponent at $p$. Moore's Conjecture posits 
a deep relationship between homotopy exponents and the number of rational homotopy groups. 

\begin{conjecture}[Moore] 
\label{Moore}
Let $X$ be a finite simply-connected $CW$-complex. Then the following are equivalent: 
\begin{letterlist} 
   \item $X$ is rationally elliptic; 
   \item $\exp_{p}(X)<\infty$ for some prime $p$; 
   \item $\exp_{p}(X)<\infty$ for all primes $p$. 
\end{letterlist} 
\end{conjecture}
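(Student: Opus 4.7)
Moore's conjecture as stated is a famous open problem, so any honest proof plan for the full statement must be framed as a program for restricted classes of $X$; the abstract itself already signals that this paper establishes Moore-type conclusions only \emph{for all but finitely many primes} and for specific families. The trivial direction is (c) $\Rightarrow$ (b): if $\exp_p(X)<\infty$ for all $p$, the hypothesis of (b) holds by taking any one prime. The content therefore lies in (b) $\Rightarrow$ (a) and (a) $\Rightarrow$ (c).

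For (b) $\Rightarrow$ (a) I would argue by contrapositive: assuming $X$ is rationally hyperbolic, I would aim to construct, for each but finitely many primes $p$, elements of $\pi_\ast(X)$ of $p$-primary order $p^{r}$ with $r \to \infty$. The plan is to exploit the integral loop space decompositions developed in \cite{BT1,BT2,T}: if $\Omega X$ integrally retracts off a controlled building block such as $\Omega\Sigma Y$ for a suitable $Y$, a mod-$p^{r}$ Moore space, or an Anick space, then known exponent results for that block, combined with exponential growth in rational homology fed in from the paper's main theorems, force $p$-primary torsion of unbounded order in $\pi_\ast(X)$. The key technical input is a sufficiently flexible integral retraction criterion, which is what \cite{BT1,BT2,T} are designed to supply. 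The exceptional finite set of bad primes would correspond to primes appearing in the orders of torsion obstructions to the retraction.

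For (a) $\Rightarrow$ (c) a uniform $p^{r}$ annihilator of the $p$-torsion in $\pi_\ast(X)$ must be produced for every prime. Outside of spheres (Cohen--Moore--Neisendorfer), $H$-spaces, and a short list of other cases this direction is essentially untouched. The natural plan is to fibre or split $\Omega X$ into pieces, each with $p$-primary exponent bounded by known results, and to combine them using Serre spectral sequence or James--Hopf style splittings; but no general structure theorem for rationally elliptic spaces is available to make this strategy uniform across all $X$.

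The hardest part is (b) $\Rightarrow$ (a), because no one currently knows how to promote exponential growth of $\pi_\ast(X)\otimes\mathbb{Q}$ to $p$-primary torsion of unbounded order for even one prime, let alone cofinitely many. I therefore expect this paper's contribution to Moore's conjecture to take the form of a partial implication (b) $\Rightarrow$ (a) valid within families for which integral loop space decompositions are already available, rather than an attack on the conjecture in full.
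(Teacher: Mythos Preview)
The statement you were asked to address is not a theorem in the paper but a \emph{conjecture}: it is stated as Conjecture~\ref{Moore} (Moore's Conjecture) and the paper offers no proof of it, nor claims to. There is therefore no ``paper's own proof'' to compare your proposal against. You recognized this correctly at the outset, and your framing of the response as a program for restricted families rather than a proof is the appropriate one.

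What the paper actually contributes toward Moore's Conjecture is Theorem~\ref{mostpwedge} and Corollary~\ref{mostpMoore}: for a homotopy cofibration $\Sigma A\to Y\to Z$ with $\Omega h$ admitting a right homotopy inverse, one retracts $\Omega(S^{m}\vee S^{n})$ off $\Omega Y$ after inverting a finite set of primes $\mathcal{P}$, and then inherits hyperbolicity, absence of a $p$-exponent, and mod-$p^{r}$ hyperbolicity from the wedge of spheres for $p\notin\mathcal{P}$. This is precisely the contrapositive of (b)$\Rightarrow$(a) restricted to that family and to cofinitely many primes, and matches the strategy you sketched. Your identification of the exceptional primes as arising from torsion obstructions to a retraction is also on the mark: in the paper they come from Lemma~\ref{primestosusp}, which suspends a finite complex into a wedge of spheres once $p>\frac{1}{2}(d-s+1)$.

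One small correction: you write that the goal in the contrapositive of (b)$\Rightarrow$(a) is to produce unbounded $p$-torsion ``for each but finitely many primes $p$''. Strictly, the contrapositive requires this for \emph{every} prime; allowing finitely many exceptions is exactly the gap between the paper's partial result and the full conjecture, and you should keep that distinction explicit.
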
 

Conjecture~\ref{Moore} is known to hold in a variety of special cases, including finite $H$-spaces~\cite{Lo,St}, 
$H$-spaces with finitely generated homology~\cite{CPSS}, odd primary Moore spaces~\cite{N}, 
torsion-free suspensions~\cite{Se}, and generalized moment-angle complexes~\cite{HST}.  

Moore's Conjecture asserts that rationally hyperbolic spaces have torsion homotopy groups 
of arbitrarily high order. But it says nothing about the rate of growth of the $p$-torsion 
in the homotopy groups. To address this the first author and Wu~\cite{HW} introduced 
the notion of $\mathbb{Z}/p^r$-hyperbolicity in analogy with rational hyperbolicity. 

\begin{definition} 
A $p$-local space $X$ is \emph{$\mathbb{Z}/p^r$-hyperbolic} if the number of 
$\mathbb{Z}/p^{r}$-summands in $\pi_{\ast}(X)$ has exponential growth, that is, 
\[\liminf_{n}\frac{{\rm log} ~t_{n}}{n}>0\] 
where $t_{n}$ is the number of $\mathbb{Z}/p^{r}$-summands in $\mathop{\bigoplus}\limits_{m\leq n}\pi_{m}(X)$. 
\end{definition} 

The following theorem summarizes our main results, stated in a less technical but slightly 
weakened form. The full statements can be found in Theorem~\ref{case1gexpthm}, 
Theorem~\ref{rhoinfthm} and Corollary~\ref{mostpMoore}. For a $CW$-complex $Z$, 
the \emph{rational homotopy Lie algebra} of $Z$ is $L_Z:=\pi_\ast(\Omega Z)\otimes \mathbb{Q}$, where 
the Lie algebra structure is induced by the Samelson product. 

\begin{theorem}\label{main}
Let $\Sigma A\stackrel{f}{\rightarrow} Y\stackrel{h}{\rightarrow} Z$ be a homotopy cofibration of simply-connected finite $CW$-complexes such that $A$ and $Z$ are not rationally contractible and $\Omega h$ has a right homotopy inverse. The following hold: 
\begin{letterlist} 
   \item if ${\rm log}~{\rm index}(\pi_\ast(Z))< {\rm log}~{\rm index}(\pi_\ast(Y))$ 
   and ${\rm log}~{\rm index}(H_{\ast}(\Omega Y;\mathbb{Q})\in (0,\infty)$, then $\mathcal{L}Y$ has good exponential growth;  
   \item if $Z$ is rationally hyperbolic with a finitely generated rational homotopy Lie algebra~$L_Z$, then $\calL Y$ has good exponential growth; 
   \item if $H_{\ast}(Y;\mathbb{Z})$ is $p$-torsion free for a prime $p$ that is sufficiently large, then
    $Y$ is rationally hyperbolic and $\mathbb{Z}/p^r$-hyperbolic for all $r\geq 1$. 
\end{letterlist} 
\end{theorem}

It is worth noting that F\'{e}lix, Halperin and Thomas studied good exponential growth via a fibration in \cite[Theorem 2 and Theorem 3]{FHT4} while Theorem \ref{main} starts instead from a cofibration. 

Theorem \ref{main} suggests that there may be a strong connection between rational hyperbolicity and $\mathbb{Z}/p^{r}$-hyperbolicity. We therefore propose an amplification of part of Moore's conjecture.
\begin{conjecture}
Let $X$ be a finite simply-connected $CW$-complex. If $X$ is rationally hyperbolic then it is 
$\mathbb{Z}/p^r$-hyperbolic for all primes $p$ and positive integers $r$. 
\end{conjecture}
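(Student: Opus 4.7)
The natural plan is to reduce the conjecture to Theorem~\ref{main}(c). For each finite simply-connected rationally hyperbolic $CW$-complex $X$, one tries to exhibit a homotopy cofibration $\Sigma A\stackrel{f}{\rightarrow} X\stackrel{h}{\rightarrow} Z$ with $A$ and $Z$ not rationally contractible and with $\Omega h$ admitting a right homotopy inverse, and then to sharpen the resulting statement from ``all primes outside a finite set $\mathcal{P}$'' to one about every prime. This splits the problem into two largely independent pieces: producing the cofibration for each rationally hyperbolic $X$, and removing the exceptional set $\mathcal{P}$.

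For the first piece, I would use the minimal Sullivan model of $X$. Rational hyperbolicity provides, via the associated free graded Lie algebra on the indecomposables, an infinite family of rationally independent higher Whitehead products; these can be realized as attaching maps of cells. Selecting such an attaching map yields a candidate cofibration $\Sigma A\to X\to Z$, and showing that $\Omega h$ has a right homotopy inverse is exactly the sort of question addressed by the inertness splittings of \cite{BT1,BT2,T}. A systematic route is to filter $X$ by skeleta, pick the lowest skeleton whose attaching map carries a nontrivial rational Whitehead product, and take $Z$ to be the corresponding subcomplex; the required inertness of $f$ would then follow from rational independence of these products. A general theorem of the form ``every finite rationally hyperbolic $CW$-complex admits a cofibration satisfying the hypothesis of Theorem~\ref{main}(c)'' would suffice for this step, and appears plausible but nontrivial.

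The hard part, and the main obstacle, is the extension to the exceptional primes in $\mathcal{P}$. The loop space splittings underlying Theorem~\ref{main}(c) are integral, and $\mathcal{P}$ records precisely those primes at which the construction of a right inverse to $\Omega h$ fails for homological-algebraic reasons; at such primes the known arguments give no estimate on the exponential growth of $\mathbb{Z}/p^{r}$-summands in $\pi_{*}(X)$, even though the rational Whitehead products remain abundant. A genuinely new $p$-local input seems necessary: plausible candidates are Anick-type $p$-primary fibrations controlling torsion in each exceptional prime, a Bockstein analysis of the relevant attaching maps at $p$, or a detection argument showing that each $\mathbb{Z}/p^{r}$-summand predicted by rational exponential growth must survive mod~$p^{r}$. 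As a warm-up one might first attempt the case $r=1$ at every prime, where mod-$p$ exact sequences and the established instances of Moore's conjecture \cite{CPSS,HST,Lo,N,Se,St} are more tractable; settling that case would likely suggest the right inductive framework for higher $r$.
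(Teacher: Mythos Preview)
The statement you are addressing is a \emph{conjecture} in the paper, not a theorem: the authors state it as an amplification of Moore's Conjecture and offer no proof or even a sketch. There is therefore no ``paper's own proof'' against which to compare your proposal.

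Your proposal is not a proof either, and you say as much: both of the two steps you isolate are left open. The first step---exhibiting, for \emph{every} finite simply-connected rationally hyperbolic $X$, a cofibration $\Sigma A\to X\to Z$ with $\Omega h$ admitting a right homotopy inverse---is not known to be possible in general; the inertness results of \cite{BT1,BT2,T} apply to rather special situations (Poincar\'e duality complexes with particular skeletal structure, generalized connected sums, etc.), and your Sullivan-model heuristic does not explain why an \emph{integral} right inverse to $\Omega h$ should exist, which is what Theorem~\ref{main}(c) requires. The second step---eliminating the finite exceptional set $\mathcal{P}$---you correctly flag as the essential difficulty, and the candidate tools you list (Anick fibrations, Bockstein analysis) are reasonable directions but do not currently yield the needed control.

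In short, the paper leaves this conjecture open, and your outline identifies the right shape of the problem without resolving either of its two substantive obstacles.
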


The paper is organized as follows. In Section \ref{sec:caseII}, we prove Theorem \ref{case1gexpthm} as a generalization of ~\cite[Theorem 4]{FHT4} characterizing good exponential growth via a cofibration instead of a fibration. In Section~\ref{sec:caseII}, we show that the manifolds considered in \cite[Section 2]{BT1} have good exponential growth. Section~\ref{sec:caseIII} is devoted to proving Theorem~\ref{rhoinfthm}, using the analytic condition considered in~\cite{FHT5} to generalize~\cite[Theorem 1.3]{FHT5}. We also provide an example, the so-called general connected sum, that partially generalizes~\cite[Theorem 3]{L2} and \cite[Theorem 1.4]{FHT5}. In Section \ref{sec:local}, we turn to exponential growth in torsion homotopy groups and prove Corollary \ref{mostpMoore}.

\bigskip

\noindent{\bf Acknowledgements.}
The first author was supported in part by the National Natural Science Foundation of China (Grant nos. 11801544 and 12288201), the National Key R\&D Program of China (No. 2021YFA1002300), the Youth Innovation Promotion Association of Chinese Academy Sciences, and the ``Chen Jingrun'' Future Star Program of AMSS.

The authors sincerely thank the referee for closely reading the manuscript and making several valuable comments and suggestions, which have greatly improved the exposition of the paper.

\section{Exponential growth in $\hlgy{\calL Y;\mathbb{Q}}$: Case I}
\label{sec:caseI}  

We will need two preliminary results from~\cite{FHT4} and one from~\cite{BT2}. 

\begin{theorem}[Theorem 1 in \cite{FHT4}]\label{FHTthm1}
Let $X$ be a simply-connected wedge of spheres of finite type such that ${\rm log}~{\rm index}(H_\ast(\Omega X;\mathbb{Q}))\in (0,\infty)$. Then $\mathcal{L}X$ has good exponential growth.~$\qqed$  
\end{theorem}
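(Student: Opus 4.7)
The plan is to verify good exponential growth by combining an explicit rational model for $\mathcal{L}X$ with singularity analysis of its Poincar\'e series.

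First, since $X=\bigvee S^{n_\alpha}$ is simply connected and a wedge of spheres, it is formal, and $H_*(\Omega X;\mathbb{Q})=T(V)$ is the tensor algebra on a graded vector space $V$ with one generator in each degree $n_\alpha-1$. The log index $\eta:=\mathrm{log\,index}(H_*(\Omega X;\mathbb{Q}))$ equals $-\log\rho$, where $\rho$ is the radius of convergence of $P_{\Omega X}(t)=(1-\sum_\alpha t^{n_\alpha-1})^{-1}$. The hypothesis $\eta\in(0,\infty)$ forces $\rho\in(0,1)$ to be a simple pole, and the geometric series expansion pins down the precise exponential rate of $\dim H_n(\Omega X;\mathbb{Q})$.

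Second, I would invoke the Jones--Burghelea identification: for simply connected $X$ with rational cochain algebra $A$, $H_*(\mathcal{L}X;\mathbb{Q})\cong HH_*(A,A)$. By formality $A$ may be replaced by $H^*(X;\mathbb{Q})=\mathbb{Q}\oplus\bigoplus_\alpha\mathbb{Q}[-n_\alpha]$, in which all nontrivial products vanish. The Hochschild complex therefore has a highly degenerate differential, and a direct computation on the normalized bar complex produces a closed-form rational expression for $P_{\mathcal{L}X}(t)$ in terms of $P_{\Omega X}(t)$, meromorphic on a disk strictly larger than $|t|\le\rho$ and with a pole at $t=\rho$ of controlled finite order. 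Applying standard transfer theorems yields $\dim H_n(\mathcal{L}X;\mathbb{Q})\sim c\, n^k\rho^{-n}$, which immediately gives $\mathrm{log\,index}(H_*(\mathcal{L}X;\mathbb{Q}))=\eta$.

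The hard part is the controlled growth condition: for each $\lambda>1$, producing a sequence $n_1<n_2<\cdots$ with $n_{i+1}<\lambda n_i$ and $\dim H_{n_i}(\mathcal{L}X;\mathbb{Q})=e^{\alpha_i n_i}$, $\alpha_i\to\eta$. The asymptotic above only yields such an $n_i$ in each long enough window, and the obstruction is arithmetic: if the integers $n_\alpha-1$ share a common divisor $d>1$, then $T(V)$, and a priori $P_{\mathcal{L}X}(t)$, is concentrated in degrees divisible by $d$, so the spacing hypothesis can fail for small $\lambda$. This is precisely where the Hochschild construction helps, since it introduces generators in the degrees $n_\alpha$ themselves, breaking the $d$-periodicity; a case analysis based on $\gcd(n_\alpha-1)$, together with the fact that consecutive nonzero coefficients of $P_{\mathcal{L}X}(t)$ lie in a bounded arithmetic window, yields the required subsequence and completes the proof.
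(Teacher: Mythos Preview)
The paper does not prove this statement: it is quoted from \cite{FHT2} (as signalled by the attribution in the theorem header and the terminal box) and used only as a black-box input to the later results. There is therefore no proof in the paper to compare your attempt against; the authors' own remark in the introduction is that \cite{FHT2,FHT3} establish such results ``using a detailed analysis of the relevant Sullivan models,'' which is a different apparatus from the Hochschild/singularity-analysis route you outline.

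On its own merits your outline is a plausible strategy, but several load-bearing steps are only gestured at. The claim that $P_{\mathcal{L}X}(t)$ extends meromorphically past $|t|=\rho$ with a pole of finite order at $t=\rho$ requires the Hochschild computation for the square-zero extension $\mathbb{Q}\oplus V^{*}$ actually to be carried out, which you have not done; and even granting it, when $\gcd(n_\alpha-1)=d>1$ the series $P_{\Omega X}(t)$ already has $d$ singularities equally spaced on the circle $|t|=\rho$, so the asserted asymptotic $\dim H_n(\mathcal{L}X;\mathbb{Q})\sim c\,n^{k}\rho^{-n}$ cannot hold along the full sequence of integers and a subsequence argument is needed from the outset, not only at the end. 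You correctly flag the controlled-growth condition as the hard part and propose that the extra generators in degrees $n_\alpha$ break the $d$-periodicity, but the ``case analysis based on $\gcd(n_\alpha-1)$'' and ``bounded arithmetic window'' are promises rather than arguments. If you want to push this route through, it is cleaner to compute $HH_{*}(T(V))$ on the chain side via the short bimodule resolution of a tensor algebra, where the answer is explicitly given by cyclic words, and then do the necklace-counting and the $\gcd$ case split directly.
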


As context, note that a simply-connected space that is rationally of finite type need not have the property that 
${\rm log}~{\rm index}(H_{\ast}(\Omega X;\mathbb{Q}))<\infty$. For example, as $e^{e^{x}}$ 
grows faster than exponentially, if~$X$ is the wedge of spheres $X=\mathop{\bigvee}\limits_{n=2}^{\infty} X_{n}$ 
where $X_{n}$ is a wedge of $e^{e^{n}}$ copies of $S^{n}$, then $X$ is rationally of finite 
type but $H_{\ast}(X;\mathbb{Q})$ has faster than exponential growth and so does 
$H_{\ast}(\Omega X;\mathbb{Q})$.

\begin{theorem}[Theorem 3 in \cite{FHT4}]\label{FHTthm3}
Let $F\rightarrow Y\rightarrow  Z$ be a fibration between simply-connected spaces with rational homology of finite type. If ${\rm log}~{\rm index}(\pi_\ast(Z))<{\rm log}~{\rm index}(\pi_\ast(Y))$, then $\mathcal{L}Y$ has good exponential growth if and only if $\mathcal{L}F$ does. In this case $H_\ast(\mathcal{L}Y;\mathbb{Q})$ and $H_\ast(\mathcal{L}F;\mathbb{Q})$ have the same log index.~$\qqed$ 
\end{theorem}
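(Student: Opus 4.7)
The plan is to apply $\text{Map}(S^{1},-)$ to the given fibration and compare the resulting free-loop fibration at the level of rational Sullivan models, exploiting the hypothesis to show that the base $\mathcal{L}Z$ is too small to affect the growth rate of $\mathcal{L}Y$. From the inequality $\log\text{ind}(H_{*}(\mathcal{L}X;\mathbb{Q}))\leq\log\text{ind}(\pi_{*}(X))$ recalled just before Definition~\ref{conexpdef}, applied to $X=Z$ together with the hypothesis, we obtain
\[\log\text{ind}(H_{*}(\mathcal{L}Z;\mathbb{Q}))\leq\log\text{ind}(\pi_{*}(Z))<\log\text{ind}(\pi_{*}(Y))=\log\text{ind}(H_{*}(\Omega Y;\mathbb{Q})),\]
so $\mathcal{L}Z$ grows strictly more slowly than $\Omega Y$.

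Next, a minimal relative Sullivan model $(\Lambda W,d)\hookrightarrow(\Lambda W\otimes\Lambda V,D)\twoheadrightarrow(\Lambda V,\bar d)$ of $F\to Y\to Z$ induces a compatible KS extension of the corresponding free-loop-space models by doubling generators and extending the canonical degree $-1$ derivation $s$ with $s(w)=sw$, $s(v)=sv$. Filtering the total loop-space model of $Y$ by word-length in $\Lambda W\otimes\Lambda sW$ produces a spectral sequence with $E^{2}_{p,q}\cong H^{p}(\mathcal{L}Z;\mathbb{Q})\otimes H^{q}(\mathcal{L}F;\mathbb{Q})$ converging to $H^{*}(\mathcal{L}Y;\mathbb{Q})$. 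The resulting coefficient-wise bound $P_{\mathcal{L}Y}(t)\leq P_{\mathcal{L}Z}(t)\cdot P_{\mathcal{L}F}(t)$ on Poincar\'e series, together with the previous inequality, forces $\log\text{ind}(\mathcal{L}Y)\leq\log\text{ind}(\mathcal{L}F)$. For the reverse bound, note that $E^{2}_{0,q}\cong H^{q}(\mathcal{L}F;\mathbb{Q})$ and the differentials out of the bottom row land in columns $p\geq 2$, whose total dimension in each degree is controlled by $\mathcal{L}Z$ and hence strictly smaller; thus most classes in $H^{q}(\mathcal{L}F;\mathbb{Q})$ survive to $H^{q}(\mathcal{L}Y;\mathbb{Q})$, yielding $\log\text{ind}(\mathcal{L}F)\leq\log\text{ind}(\mathcal{L}Y)$ and equality. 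Both directions transfer the controlled exponential growth structure between the two spaces.

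The main obstacle is the quantitative bookkeeping required to transport the sequence $\{n_{i}\}$ and the gap condition $n_{i+1}<\lambda n_{i}$ of Definition~\ref{conexpdef} between $\mathcal{L}F$ and $\mathcal{L}Y$. The strict log-index gap ensures that $\mathcal{L}Z$-cancellations (of order at most $e^{\beta n}$ in degree $n$ with $\beta<\log\text{ind}(\mathcal{L}F)$) cannot annihilate the $e^{\alpha n}$-many large classes contributed by $\mathcal{L}F$; but extracting an explicit sequence that realizes controlled growth on both sides requires showing that, for each $\lambda>1$, a sequence witnessing controlled growth on one side can be perturbed within the same gap to witness it on the other, which is the most delicate step.
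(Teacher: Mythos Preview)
This theorem is not proved in the present paper. It is quoted verbatim from \cite{FHT2} as a preliminary result and is stated here with a terminal $\qqed$ but no argument; the paper simply uses it as a black box in the proofs of Theorem~\ref{case1gexpthm} and Theorem~\ref{ysetgexpthm}. There is therefore no proof in this paper against which to compare your proposal.

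As for the proposal on its own merits: the strategy---pass to the free-loop fibration $\mathcal{L}F\to\mathcal{L}Y\to\mathcal{L}Z$, compare growth via the associated KS model or Serre spectral sequence, and use the strict log-index gap to show $\mathcal{L}Z$ is too small to matter---is the natural one and is broadly what \cite{FHT2} does. Two points in your sketch are genuinely incomplete. First, deducing $\log\mathrm{ind}(\mathcal{L}Y)\leq\log\mathrm{ind}(\mathcal{L}F)$ from $P_{\mathcal{L}Y}\leq P_{\mathcal{L}Z}\cdot P_{\mathcal{L}F}$ needs $\log\mathrm{ind}(\mathcal{L}Z)\leq\log\mathrm{ind}(\mathcal{L}F)$, which in turn requires the intermediate fact $\log\mathrm{ind}(\pi_\ast(F))=\log\mathrm{ind}(\pi_\ast(Y))$; this follows from the long exact sequence in rational homotopy but you do not mention it. Second, the ``most classes survive'' step is only heuristic: the target of $d_r$ out of $E_r^{0,q}$ has dimension bounded by a piece of $H^\ast(\mathcal{L}Z)\otimes H^\ast(\mathcal{L}F)$, not by $H^\ast(\mathcal{L}Z)$ alone, so showing that the cumulative kernel over all $r$ has strictly smaller exponential order requires a careful summation exploiting the strict gap. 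You correctly flag this bookkeeping as the crux, but without carrying it out neither the equality of log indices nor the transfer of the controlled-growth sequence $\{n_i\}$ in Definition~\ref{conexpdef} is established.
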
 

\begin{theorem}[Proposition 3.5 in~\cite{BT2}] 
\label{BTtheorem} 
Suppose that 
\(\nameddright{\Sigma A}{f}{Y}{h}{Z}\) 
is a homotopy cofibration of simply-connected spaces and $\Omega h$ has a right homotopy inverse. 
Then there is a homotopy fibration 
\[\nameddright{(\Omega Z\wedge\Sigma A)\vee\Sigma A}{}{Y}{h}{Z}\] 
which splits after looping to give a homotopy equivalence 
\[
 \hspace{5cm}
 \Omega Y\simeq\Omega Z\times\Omega((\Omega Z\wedge\Sigma A)\vee\Sigma A).
 \hspace{5cm}\Box\] 
\end{theorem}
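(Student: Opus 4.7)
The plan is to first identify the homotopy fiber $F$ of $h : Y \to Z$ with $(\Omega Z \wedge \Sigma A) \vee \Sigma A$, and then deduce the loop space decomposition from the splitting hypothesis. Because $\Sigma A \to Y \to Z$ is a homotopy cofibration, the composite $h \circ f$ is null-homotopic, so $f$ lifts to a map $\widetilde{f} : \Sigma A \to F$. I would aim to construct an extension
\[
\phi : (\Omega Z \wedge \Sigma A) \vee \Sigma A \longrightarrow F
\]
whose restriction to the wedge summand $\Sigma A$ is $\widetilde{f}$, and then show that $\phi$ is a homotopy equivalence.

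For the second summand, I would exploit the right homotopy inverse $s : \Omega Z \to \Omega Y$ together with the adjoint $\widehat{f} : A \to \Omega Y$ of $f$. The Samelson-style commutator
\[
\Omega Z \times A \longrightarrow \Omega Y, \qquad (z, a) \longmapsto s(z)\, \widehat{f}(a)\, s(z)^{-1}\, \widehat{f}(a)^{-1}
\]
vanishes whenever either coordinate is the basepoint, so it factors through $\Omega Z \wedge A$; its adjoint yields a map $\Omega Z \wedge \Sigma A \to Y$. Post-composition with $h$ corresponds under adjunction to applying $\Omega h$ to the commutator, which is null-homotopic because $\Omega h \circ s \simeq 1$ while $\Omega h \circ \widehat{f}$ is null (being the adjoint of $h \circ f$). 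Hence the map lifts through $F$ and, together with $\widetilde{f}$, assembles into the desired $\phi$.

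To verify that $\phi$ is a homotopy equivalence, I would argue on homology, or equivalently on $\Omega \phi$. The section $s$ together with the fiber inclusion $F \to Y$ produces a weak equivalence $\Omega Z \times \Omega F \to \Omega Y$ via loop multiplication; this immediately gives the stated splitting of $\Omega Y$ once $F$ has been identified. Combining this decomposition with the long exact sequence of the cofibration $\Sigma A \to Y \to Z$, one can compute $H_{\ast}(F)$ in any coefficients, and compare it with $H_{\ast}((\Omega Z \wedge \Sigma A) \vee \Sigma A)$. The main obstacle I anticipate is tracing the commutator construction carefully enough through the loop space splitting to verify that $\phi$ realizes this abstract homology isomorphism, rather than merely establishing one; this is the technical heart of the argument and requires care with the Serre spectral sequence of $F \to Y \to Z$ and the James splitting of $\Omega\Sigma$ to compute both sides in matching form.
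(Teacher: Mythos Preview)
The paper does not prove this statement: it is imported from~\cite{BT2} (as Proposition~3.5 there) and is closed with a $\Box$, in the same way as the other quoted results in Section~2 and Lemma~4.2. There is therefore no in-paper argument to compare your proposal against.

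For what it is worth, your outline is in the right spirit and close to how such results are typically established, though the usual route in the Beben--Theriault papers differs in one respect. Rather than building the map $\Omega Z\wedge\Sigma A\to Y$ from a Samelson commutator in $\Omega Y$ and then lifting it to the fibre, one uses the holonomy action $\Omega Z\times F\to F$ of the fibration $F\to Y\to Z$ directly: precomposing with $1\times\widetilde f$ gives $\Omega Z\times\Sigma A\to F$, which factors through the half-smash $\Omega Z\ltimes\Sigma A$, and the latter splits as $(\Omega Z\wedge\Sigma A)\vee\Sigma A$ because $\Sigma A$ is a suspension. This has the minor advantage that the map lands in $F$ by construction, so no separate lifting step is needed. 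The verification that the resulting map is an equivalence is, as you say, the technical core; it is carried out after looping, using the section $s$ to identify $H_\ast(\Omega Y)\cong H_\ast(\Omega Z)\otimes H_\ast(\Omega F)$ and matching the tensor factor against $H_\ast(\Omega((\Omega Z\wedge\Sigma A)\vee\Sigma A))$ via the James and K\"unneth splittings. Your commutator construction would feed into the same verification, so the difference is largely one of packaging.
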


It will be useful to record a growth result related to Theorem~\ref{BTtheorem}.  

\begin{lemma} 
   \label{BTgrowth} 
   If $\Sigma A$ and $Z$ are simply-connected spaces that are not rationally contractible then 
   ${\rm log}~{\rm index}(H_{\ast}(\Omega((\Omega Z\wedge\Sigma A)\vee\Sigma A)))>0$. 
\end{lemma} 

\begin{proof} 
As any suspension is rationally 
a wedge of spheres, $(\Omega Z\wedge\Sigma A)\vee\Sigma A$ is rationally a wedge of spheres. 
Since $\Sigma A$ is not rationally contractible, it has at least one sphere as a wedge summand. 
Since~$Z$ is not rationally contractible and simply-connected, $\Omega Z$ is not rationally contractible, 
and therefore $\Omega Z\wedge\Sigma A$ also has at least one sphere 
as a wedge summand. Thus $(\Omega Z\wedge\Sigma A)\vee\Sigma A$ is rationally a 
wedge of at least two spheres, implying that it is rationally hyperbolic, and therefore 
${\rm log}~{\rm index}(\pi_{\ast}((\Omega Z\wedge\Sigma A)\vee\Sigma A))>0$. Then, 
by~(\ref{logcompare}), 
${\rm log}~{\rm index}(H_\ast(\Omega(\Omega Z\wedge\Sigma A)\vee\Sigma A);\mathbb{Q}))>0$. 
\end{proof}

We introduce two variations of an inert map. Given a homotopy cofibration 
$\Sigma A\stackrel{f}{\rightarrow} Y\stackrel{h}{\rightarrow} Z$ 
of simply-connected spaces, the map $f$ is {\it inert} if $h$ is surjective in rational homotopy. 
This generalizes the classical notion of an inert map defined by F\'{e}lix, Halperin and Thomas \cite{FHT1}, who 
considered the case when $A=S^{k}$ for some $k\geq 1$ and $f$ is a cell attachment.  
As spaces are simply-connected, the surjectivity of $h$ in rational homotopy impies that $\Omega h$ is 
also surjective in rational homotopy, so as loop spaces split rationally as products of Eilenberg-Mac Lane spaces \cite[Chapter 16 (c)]{FHT2}, $\Omega h$ has a right homotopy inverse. Thus $f$ is inert in our sense 
if and only if $\Omega h$ has a right homotopy inverse rationally. 
Moreover, if $f$ is inert it follows that ${\rm log}~{\rm index}(\pi_\ast(Z))\leq {\rm log}~{\rm index}(\pi_\ast(Y))$. We call $f$ {\it strongly inert} if it is inert and ${\rm log}~{\rm index}(\pi_\ast(Z))< {\rm log}~{\rm index}(\pi_\ast(Y))$.
The following theorem is a stronger form of Theorem \ref{main}~(a). 

\begin{theorem}\label{case1gexpthm}
Let $\Sigma A\stackrel{f}{\rightarrow} Y\stackrel{h}{\rightarrow} Z$ be a homotopy cofibration of 
simply-connected spaces that are rationally of finite type, and suppose that 
${\rm log}~{\rm index}(H_\ast(\Omega Y;\mathbb{Q}))\in (0,\infty)$. If $f$ is strongly inert 
then~$\mathcal{L}Y$ has~good exponential growth. 
\end{theorem}

\begin{proof} 
By definition, $f$ being strongly inert means that 
${\rm log}~{\rm index}(\pi_\ast(Y))>{\rm log}~{\rm index}(\pi_\ast(Z))$. 
In particular, this implies that $\Sigma A$ is not rationally contractible. 
There are two cases depending on whether $Z$ is rationally contractible.  

If $Z$ is rationally contractible, then $Y\simeq \Sigma A$ is rationally a simply-connected wedge 
of spheres of finite type. By hypothesis, 
${\rm log}~{\rm index}(H_\ast(\Omega Y;\mathbb{Q}))\in (0,\infty)$, 
so Theorem \ref{FHTthm1} implies that $\mathcal{L}Y$ has good exponential growth. 

If $Z$ is not rationally contractible, then the hypothesis that $f$ is inert implies that $\Omega h$ 
has a right homotopy inverse rationally. Theorem~\ref{BTtheorem} then implies that there is 
a rational homotopy fibration 
 \begin{equation} 
   \label{BTcase1} 
   \nameddright{(\Omega Z\wedge\Sigma A)\vee\Sigma A}{}{Y}{h}{Z} 
\end{equation} 
that splits after looping to give a homotopy equivalence 
\[\Omega Y\simeq\Omega Z\times\Omega((\Omega Z\wedge\Sigma A)\vee\Sigma A).\] 
For convenience, let $W=(\Omega Z\wedge\Sigma A)\vee\Sigma A$.
With respect to~(\ref{BTcase1}), $f$ being strongly inert means that 
${\rm log}~{\rm index}(\pi_\ast(Y))>{\rm log}~{\rm index}(\pi_\ast(Z))$. 
Theorem~\ref{FHTthm3} therefore implies that $\mathcal{L}Y$ has good exponential growth if and only if 
$\mathcal{L} W$ does. Since $\Sigma A$ and $Z$ 
are both not rationally contractible, Lemma~\ref{BTgrowth} implies that 
${\rm log}~{\rm index}(H_\ast(\Omega W;\mathbb{Q}))>0$. On the other hand, since $\Omega W$ 
is a retract of $\Omega Y$ and ${\rm log}~{\rm index}(H_\ast(\Omega Y;\mathbb{Q}))<\infty$
by hypothesis, we obtain  ${\rm log}~{\rm index}(H_\ast(\Omega W;\mathbb{Q}))<\infty$. 
Hence $W$ is a wedge of simply-connected spheres with 
${\rm log}~{\rm index}(H_\ast(\Omega W;\mathbb{Q}))\in (0,\infty)$, 
so Theorem~\ref{FHTthm1} implies that $\mathcal{L} W$ has good exponential growth. 
\end{proof}

\begin{proof}[Proof of Theorem~\ref{main}~(a)] 
By hypothesis, $\Omega h$ has a right homotopy inverse,  
${\rm log}~{\rm index}(\pi_{\ast}(Z))<{\rm log}~{\rm index}(\pi_{\ast}(Y))$, 
and ${\rm log}~{\rm index}(H_\ast(\Omega Y;\mathbb{Q}))\in (0,\infty)$. 
The first hypothesis is the condition defining~$f$ as inert and it combines with the second as the 
conditions defining $f$ as strongly inert. Thus Theorem~\ref{case1gexpthm} applies to show 
that $\mathcal{L} Y$ has good exponential growth. 
\end{proof} 

In Theorem~\ref{case1gexpthm} the condition that ${\rm log}~{\rm index}(H_\ast(\Omega Y;\mathbb{Q}))\in (0,\infty)$ holds for a wide range of spaces. Let $Y$ be a simply-connected $CW$-complex satisfying the following three conditions
\begin{itemize}
\item ${\rm log}~{\rm index}(H_{\ast}(Y;\mathbb{Q}))<\infty$,
\item ${\rm log}~{\rm index}(\pi_\ast(Y))>-\infty$,
\item the rational Lusternik-Schnirelmann category of $Y$ is finite.
\end{itemize}
Then by \cite[Theorem 4]{FHT3}, ${\rm log}~{\rm index}(H_\ast(\Omega Y;\mathbb{Q}))\in (0,\infty)$. In particular, the condition is satisfied for any simply-connected rationally hyperbolic finite $CW$-complex.

Theorem~\ref{case1gexpthm} should be compared to \cite[Theorem 4]{FHT4}. 
When $\Sigma A$ is a sphere Theorem \ref{case1gexpthm} is analogous to \cite[Theorem~4]{FHT4} 
but replaces the finiteness condition on a quotient of the homotopy Lie algebra $\pi_{\ast}(Y)\otimes\mathbb{Q}$ 
with a finiteness condition on the log index of $H_{\ast}(\Omega Y;\mathbb{Q})$.  
As an improvement, it generalizes~\cite[Theorem 4]{FHT4} from $\Sigma A$ being a sphere to 
it being any suspension.

\begin{example}\label{exdefold}
As in \cite[Section 4]{FHT4}, let $Z$ be a closed simply-connected manifold of dimension $k+2$ whose rational cohomology algebra is not generated by a single class. If $Y=Z-{pt}$ then there is a homotopy cofibration $S^{k+1}\stackrel{f}{\rightarrow} Y\stackrel{h}{\rightarrow} Z$. By \cite{HL} and \cite[Example 1]{FHT4}, $f$ is strongly inert. Therefore, if ${\rm log}~{\rm index}(H_\ast(\Omega Y;\mathbb{Q}))\in (0,\infty)$ 
then $\mathcal{L}Y$ has good exponential growth by Theorem \ref{case1gexpthm}. Again, compared 
to \cite[Example 1]{FHT4}, the finiteness condition on a quotient of the homotopy Lie algebra 
$\pi_\ast(\Omega Y)\otimes\mathbb{Q}$ is replaced by one on the log index of 
$H_{\ast}(\Omega Y;\mathbb{Q})$.
\end{example}

\section{Exponential growth in $\hlgy{\calL Y;\mathbb{Q}}$: Case II} 
\label{sec:caseII} 
We consider the rational counterparts of spaces considered in \cite[Section 2]{BT1}.
Let $m$ and $n$ be integers such that $1<m\leq n-m$. Suppose that $Y$ is rationally a finite $n$-dimensional $(m-1)$-connected $CW$-complex with rational homology
\[
H_\ast(Y;\mathbb{Q})\cong \mathbb{Q}\{a_1, \ldots, a_{\ell},z\},
\]
where
\[
1<m=|a_1|\leq \cdots |a_\ell|=n-m<|z|=n.
\]
Let $Y_{n-1}$ be the $(n-1)$-skeleton of $Y$ and let $i:Y_{n-1}\rightarrow Y$ be the inclusion. Define $\mathcal{Y}$ as the collection of all such spaces $Y$ which also satisfy the following two properties:
\begin{itemize}
\item[(1)] there is a rational homotopy equivalence $Y_{n-1}\simeq \Sigma J\vee (S^{m}\vee S^{n-m})$ for some space $J$ that is not rationally contractible;
\item[(2)] if $Z$ is the homotopy cofiber of the composite $\Sigma  J\hookrightarrow Y_{n-1}\stackrel{i}{\rightarrow}Y$ then there is a ring isomorphism $H^\ast(Z;\mathbb{Q})\cong H^\ast(S^m\times S^{n-m};\mathbb{Q})$.
\end{itemize} 
It is worth emphasizing that any space $Y\in\mathcal{Y}$ is rationally a simply-connected 
finite $CW$-complex, and therefore the space $\Sigma J$ is also rationally a simply-connected finite 
$CW$-complex.

The following preliminary result is needed; its proof in~\cite{FHT3} using the Sullivan and 
Adams-Hilton models suggests the statement may have been well known beforehand. 

\begin{proposition}[Proposition 2 in~\cite{FHT3}]\label{FHTlogindex} 
Let $X$ be a simply-connected $CW$-complex that is rationally of finite type. Then 
${\rm log}~{\rm index}(\pi_{\ast}(X))<\infty$ if and only if 
${\rm log}~{\rm index}(H_{\ast}(X;\mathbb{Q}))<\infty$.~$\qqed$ 
\end{proposition}  

\begin{corollary}\label{FHTlogindexcor} 
Let $X$ be a simply-connected finite $CW$-complex. Then 
${\rm log}~{\rm index}(H_{\ast}(\Omega X;\mathbb{Q}))<\infty$. 
\end{corollary}  

\begin{proof} 
Since $X$ is a finite $CW$-complex, ${\rm log}~{\rm index}(H_{\ast}(X;\mathbb{Q}))<\infty$, 
so Proposition~\ref{FHTlogindex} implies that 
${\rm log}~{\rm index}(\pi_{\ast}(X))<\infty$. By~(\ref{logcompare}), this implies that 
${\rm log}~{\rm index}(H_{\ast}(\Omega X;\mathbb{Q}))<\infty$. 
\end{proof}

\begin{theorem}\label{ysetgexpthm}
For any $Y\in\mathcal{Y}$, $\mathcal{L}Y$ has good exponential growth.
\end{theorem}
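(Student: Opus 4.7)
The plan is to apply Theorem~\ref{case1gexpthm} to the homotopy cofibration
\[\Sigma J\stackrel{f}{\longrightarrow}Y\stackrel{h}{\longrightarrow}Z\]
furnished by condition~(2) in the definition of $\mathcal{Y}$. The nontriviality hypotheses of Theorem~\ref{case1gexpthm} are immediate: $J$ is rationally nontrivial by condition~(1), and $Z$ is not rationally contractible because $H^\ast(Z;\mathbb{Q})\cong H^\ast(S^m\times S^{n-m};\mathbb{Q})$ has classes in positive degrees. The task therefore reduces to showing that $f$ is strongly inert.

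First I would pin down the rational homotopy type of $Z$. Since $Z$ has rational cells only in degrees $0,m,n-m,n$, its rational type is determined by the rational class of the $n$-cell attaching map $\phi'\in\pi_{n-1}(S^m\vee S^{n-m})\otimes\mathbb{Q}$. The prescribed cup product in $H^\ast(Z;\mathbb{Q})$ forces $\phi'$ to be a nonzero multiple of the Whitehead product $[\iota_m,\iota_{n-m}]$, a quick free Lie algebra calculation (with a small extra step in the degenerate case $m=n-m$). Consequently $Z\simeq_{\mathbb{Q}}S^m\times S^{n-m}$, so $Z$ is rationally elliptic and $\log\mathrm{index}(\pi_\ast(Z))\leq 0$.

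Next, and most importantly, I would construct a rational right homotopy inverse to $\Omega h$ in order to establish inertness of $f$. The splitting in condition~(1) yields the inclusions $S^m,S^{n-m}\hookrightarrow Y_{n-1}$ of the two sphere wedge summands; composing with the inclusion $Y_{n-1}\hookrightarrow Y$ and looping produces maps $s_1\colon\Omega S^m\to\Omega Y$ and $s_2\colon\Omega S^{n-m}\to\Omega Y$, and loop multiplication on $\Omega Y$ combines them into
\[s\colon\Omega S^m\times\Omega S^{n-m}\longrightarrow\Omega Y.\]
Rationally, the composite $Y_{n-1}\hookrightarrow Y\stackrel{h}{\longrightarrow}Z$ factors as $Y_{n-1}\to Y_{n-1}/\Sigma J\simeq S^m\vee S^{n-m}\hookrightarrow Z$, and this last inclusion coincides rationally with the standard inclusion $S^m\vee S^{n-m}\hookrightarrow S^m\times S^{n-m}\simeq_{\mathbb{Q}}Z$. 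Hence each $\Omega h\circ s_i$ is rationally the inclusion into one factor of $\Omega Z\simeq_{\mathbb{Q}}\Omega S^m\times\Omega S^{n-m}$, and a short $H$-space calculation then gives $\Omega h\circ s\simeq\mathrm{id}_{\Omega Z}$ rationally. This is the main obstacle, since it relies on the several compatible rational identifications above.

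Finally, with inertness in hand, Theorem~\ref{BTtheorem} applied rationally gives
\[\Omega Y\simeq_{\mathbb{Q}}\Omega Z\times\Omega\bigl((\Omega Z\wedge\Sigma J)\vee\Sigma J\bigr).\]
Because $\Omega Z$ has infinite-dimensional rational homology (as $m,n-m\geq 2$) and $\Sigma J$ is rationally nontrivial, the second factor is rationally the loop space of an infinite wedge of spheres, whose rational homotopy groups have exponential growth. Thus $\log\mathrm{index}(\pi_\ast(Y))>0\geq\log\mathrm{index}(\pi_\ast(Z))$, so $f$ is strongly inert, and Theorem~\ref{case1gexpthm} concludes that $\mathcal{L}Y$ has good exponential growth.
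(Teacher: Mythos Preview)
Your approach is essentially the paper's: show $f$ is strongly inert and apply Theorem~\ref{case1gexpthm}. The only difference is that you argue directly for $Z\simeq_{\mathbb{Q}}S^m\times S^{n-m}$ and for the rational right inverse to $\Omega h$, whereas the paper simply cites~\cite{BT1} for both facts and then proceeds exactly as you do via Theorem~\ref{BTtheorem} and the decomposition of $\Omega Y$.

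One small slip to fix: your claim that $\Omega Z$ has infinite-dimensional rational homology ``as $m,n-m\geq 2$'' fails when $m$ and $n-m$ are both odd, since then $\Omega S^m\simeq_{\mathbb{Q}}S^{m-1}$ and $\Omega Z\simeq_{\mathbb{Q}}S^{m-1}\times S^{n-m-1}$ has finite rational homology. This does not damage the argument, because what you actually need is only that $(\Omega Z\wedge\Sigma J)\vee\Sigma J$ is rationally a wedge of at least two spheres, and that follows already from the rational nontriviality of $\Omega Z$ and $\Sigma J$; the paper states it in just this form (``rationally a wedge of spheres, so its log index is positive'') without asserting the wedge is infinite.
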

\begin{proof} 
Consider the homotopy cofibration 
\(\nameddright{\Sigma J}{f}{Y}{h}{Z}\) 
that defines $Z$. Note that as $\Sigma J$ and~$Y$ are rationally simply-connected finite 
$CW$-complexes, so is $Z$. By~\cite[Section 2]{BT1}, $\Omega h$ has a rational right homotopy inverse and 
$\Omega Z\simeq\Omega (S^{m}\times S^{n-m})$ rationally. Arguing as in~\cite{BT1}, or alternatively 
by Theorem~\ref{BTtheorem}, there is a rational homotopy fibration 
\[
(\Omega (S^m\times S^{n-m})\wedge\Sigma J)\vee \Sigma J \stackrel{f}{\rightarrow} Y\stackrel{h}{\rightarrow}Z
\]
and a rational homotopy equivalence 
\begin{equation}\label{BTdeceq2}
\Omega Y\simeq \Omega (S^m\times S^{n-m})\times \Omega ((\Omega (S^m\times S^{n-m})\wedge\Sigma J)\vee \Sigma J).
\end{equation} 
Since $\Omega Z\simeq\Omega(S^{m}\times S^{n-1})$, the space $Z$ is rationally elliptic and 
therefore ${\rm log}~{\rm index}(\pi_\ast(Z))=-\infty$. On the other hand, 
$(\Omega (S^m\times S^{n-m})\wedge\Sigma J)\vee \Sigma J$ is rationally a wedge of at least 
two spheres since~$J$ is not rationally contractible, so 
${\rm log}~{\rm index}(\pi_{\ast}((\Omega (S^m\times S^{n-m})\wedge\Sigma J)\vee \Sigma J))>0$. 
The homotopy equivalence in~(\ref{BTdeceq2}) then implies that 
${\rm log}~{\rm index}(\pi_\ast(Y))>0$. Thus the map~$f$ is strongly inert. Further, 
by~(\ref{logcompare}), ${\rm log}~{\rm index}(\pi_\ast(Y))>0$ implies that 
${\rm log}~{\rm index}(H_{\ast}(\Omega Y;\mathbb{Q}))>0$. On the other hand, since~$Y$ 
is a simply-connected finite $CW$-complex, by Corollary~\ref{FHTlogindexcor}, 
${\rm log}~{\rm index}(H_{\ast}(\Omega Y;\mathbb{Q}))<\infty$. Therefore all the hypotheses of 
Theorem~\ref{case1gexpthm} hold, implying that $\calL Y$ has good exponential growth. 
\end{proof} 

A finite $CW$-complex $M$ is a \emph{Poincar\'{e} Duality complex} if $\cohlgy{M}$ satisfies 
Poincar\'{e} Duality. A closed orientable manifold is an example of a Poincar\'{e} Duality 
complex, but many non-manifold examples also exist. 

\begin{example} 
\label{almostwedge} 
Let $M$ be an orientable closed Poincar\'{e} Duality complex of dimension $n$ and connectivity $m-1$ 
whose $(n-1)$-skeleton $M_{n-1}$ has the property that it is rationally homotopy equivalent to a 
wedge of spheres, say 
\[M_{n-1}\simeq_{\mathbb{Q}}\bigvee_{k=1}^{\ell} S^{n_{k}}.\] 
If $k\geq 3$ then there is a rational homotopy equivalence $M_{n-1}\simeq \Sigma J\vee S^{m}\vee S^{n-m}$, 
where $J$ is rationally nontrivial. 
Poincar\'{e} Duality implies that the spheres $S^{m}$ and $S^{n-m}$ may be chosen to have 
a nontrivial cup product (implying that $M\in\mathcal{Y}$) provided that not all cup products are squares. 
So we exclude the cases when $n\in\{4,8,16\}$ and all of $n_{1},\ldots,n_{k}$ equal $\frac{n}{2}$. 
Then $\calL M$ has good exponential growth by Theorem \ref{ysetgexpthm}. 

 As a concrete example, suppose that $M$ is an $(n-1)$-connected closed smooth $2n$-dimensional manifold. 
These manifolds were deeply studied by Wall \cite{Wal} in geometric topology. As  
$M_{2n-1}\simeq\mathop{\bigvee}\limits_{i=1}^{k} S^{n}$, when $k\geq 3$ it follows that $\calL M$ has good exponential growth provided that not all cup products are squares in $H^\ast(M;\mathbb{Q})$.

On the other hand, if $k=2$, Theorem \ref{ysetgexpthm} does not hold. For example, let $M=S^{m}\times S^{n-m}$. 
In this case, $J$ is contractible and $\mathcal{L}(S^{m}\times S^{n-m})$ is rationally elliptic because 
the standard evaluation fibration 
\(\nameddright{\Omega(S^{m}\times S^{n-m})}{}{\mathcal{L}(S^{m}\times S^{n-1})}{}{S^{m}\times S^{n-m}}\) 
induces split exact sequences in homotopy groups and spheres are elliptic. 
Hence, the assumption in Theorem~\ref{ysetgexpthm} that $J$ is not rationally contractible is necessary.  
\end{example}

\section{Exponential growth in $\hlgy{\calL Y;\mathbb{Q}}$: Case III}
\label{sec:caseIII}
In \cite{FHT5}, F\'{e}lix, Halperin and Thomas studied exponential growth with the analytic information of Hilbert series. For a graded vector space $V=\{V_i\}_{i\geq 0}$, its formal {\it Hilbert series} is defined by
\[
V(z)=\sum\limits_{i\geq 0}{\rm dim}~V_i z^i.  
\]
The radius of convergence $\rho_{V}$ of $V(z)$ is defined by $\rho_V=e^{-{\rm log}~{\rm index}(V)}$. Accordingly if $V$ has exponential growth then $0<\rho_V<1$.
If $X$ is a topological space let $X(z)$ be the Hilbert series of $H_\ast(X;\mathbb{Q})$ and let $\rho_X$ be its radius of convergence. Let 
\[\Omega X(\rho_{\Omega X})=\lim_{z\rightarrow\rho_{\Omega X}}\Omega X(z).\]  
As noted in~\cite[Page 2522]{FHT5}, a key condition in their work is the assumption that 
\mbox{$\Omega X(\rho_{\Omega X})=\infty$}; this is satisfied by all known examples of rationally 
hyperbolic, finite, simply-connected $CW$-complexes.  
The following theorem is a stronger form of part (b) of Theorem \ref{main}. 

\begin{theorem}\label{rhoinfthm}
Let $\Sigma A\stackrel{f}{\rightarrow} Y\stackrel{h}{\rightarrow} Z$ be a homotopy cofibration of simply-connected spaces that are rationally of finite type. Suppose that $A$ is not rationally contractible, ${\rm log}~{\rm index}(H_\ast(\Omega Y;\mathbb{Q}))<\infty$, $f$ is inert and $\Omega Z(\rho_{\Omega Z})=\infty$. 
Then $f$ is strongly inert and $\mathcal{L}Y$ has good exponential growth.
\end{theorem}
\begin{proof} 
Since $f$ is inert, $\Omega h$ has a right rational homotopy inverse, so by Theorem~\ref{BTtheorem} there is a rational homotopy fibration
 \[\nameddright{(\Omega Z\wedge\Sigma A)\vee\Sigma A}{} 
           {Y}{h}{Z}\] 
that splits after looping to give a homotopy equivalence 
\[\Omega Y\simeq\Omega Z\times\Omega((\Omega Z\wedge\Sigma A)\vee\Sigma A).\] 
By hypothesis, $\Sigma A$ is not rationally contractible. As $\Omega Z(\rho_{\Omega Z})=\infty$, 
$Z$ is also not rationally contractible. Therefore Lemma~\ref{BTgrowth} implies that 
${\rm log}~{\rm index}(H_\ast(\Omega(\Omega Z\wedge\Sigma A)\vee\Sigma A);\mathbb{Q}))>0$. 
As $\Omega((\Omega Z\wedge\Sigma A)\vee\Sigma A)$ retracts off $\Omega Y$, this implies that 
${\rm log}~{\rm index}(H_\ast(\Omega Y;\mathbb{Q}))>0$. By hypothesis, this log index is also~$<\infty$, 
so we obtain ${\rm log}~{\rm index}(H_\ast(\Omega Y;\mathbb{Q}))\in (0,\infty)$. Thus, if $f$ is strongly 
inert then the hypotheses of Theorem~\ref{case1gexpthm} are satisfied, implying that $\mathcal{L} Y$ 
has good exponential growth. 

It remains to show that $f$ is strongly inert. 
Rationally $\Sigma  A$ is a wedge of spheres, and since~$A$ is not rationally contractible, this wedge 
has at least one sphere as a summand. Suppose that  $\Sigma A\simeq\mathop{\bigvee}\limits_{\alpha} S^{n_{\alpha}}$. Then
\[
\Omega Z\wedge\Sigma A\simeq \mathop{\bigvee}\limits_{\alpha} \big(S^{n_\alpha} \wedge \Omega Z\big).
\] 
For any $\alpha$ we have $H(\Omega(S^{n_\alpha} \wedge \Omega Z);\mathbb{Q})\cong TV$, the tensor algebra on the graded vector space $V$ such that $V_{i}\cong H_{i-n_\alpha+1}(\Omega Z;\mathbb{Q})$. Thus 
\[
\Omega (S^{n_\alpha} \wedge \Omega Z)(z)=\frac{1}{1-z^{n_\alpha-1}\Omega Z(z)}.
\]
By assumption, $\Omega Z(\rho_{\Omega Z})=\infty$, so it follows that for sufficiently small $\varepsilon>0$, the absolute value of $\omega(z) =z^{n_\alpha-1}\Omega Z(z)$ is greater than $1$ for any $|z|\geq \rho_{\Omega Z}-\varepsilon$. However, as the function $f(\omega)=\frac{1}{1-\omega}$ has radius of convergence equal to $1$, we see that $\Omega (S^{n_\alpha} \wedge \Omega Z)(z)=\frac{1}{1-\omega(z)}$ diverges for any $|z|\geq \rho_{\Omega Z}-\varepsilon$. Thus
$\rho_{\Omega (S^{n_\alpha} \wedge \Omega Z)}<\rho_{\Omega Z}$. Hence
\begin{equation} 
  \label{radcompare} 
\rho_{\Omega((\Omega Z\wedge\Sigma A)\vee\Sigma A)}\leq \rho_{\Omega(\Omega Z\wedge\Sigma A)}
\leq
\rho_{\Omega (S^{n_\alpha} \wedge \Omega Z)}<\rho_{\Omega Z}.
\end{equation}  
By definition $\rho_{\Omega X}=e^{-{\rm log}~{\rm index}(H_{\ast}(\Omega X;\mathbb{Q}))}$,  
so~(\ref{radcompare}) implies that 
\[{\rm log}~{\rm index}(H_{\ast}(\Omega Z;\mathbb{Q}))< 
     {\rm log}~{\rm index}(H_{\ast}(\Omega((\Omega Z\wedge\Sigma A)\vee\Sigma A)).\] 
Since $\Omega((\Sigma\Omega Z\wedge\Sigma A)\vee\Sigma A)$ is a retract of $\Omega Y$, 
we obtain 
\[{\rm log}~{\rm index}(H_{\ast}(\Omega Z;\mathbb{Q})< {\rm log}~{\rm index}(H_{\ast}(\Omega Y;\mathbb{Q}).\] 
By~(\ref{logcompare}), this implies that 
${\rm log}~{\rm index}(\pi_{\ast}(Z))<{\rm log}~{\rm index}(\pi_{\ast}(Y))$, 
and therefore $f$ is strongly inert. 
\end{proof}

\begin{proof}[Proof of Theorem~\ref{main}~(b)] 
We aim to apply Theorem~\ref{rhoinfthm}. By hypothesis, $A$ is not rationally contractible. 
By hypothesis, $Y$ is a simply-connected finite $CW$-complex so Corollary~\ref{FHTlogindexcor}  
implies that ${\rm log}~{\rm index}(H_\ast(\Omega Y;\mathbb{Q}))<\infty$. The hypothesis that $\Omega h$ 
has a right homotopy inverse is, by definition, the same as $f$ being inert. By hypothesis, 
$Z$ is a simply-connected rationally hyperbolic finite $CW$-complex with a finitely generated 
homotopy Lie algebra, so by \cite[Proposition~2.1]{FHT5} or~\cite{A} it follows that 
$\Omega Z(\rho_{\Omega Z})=\infty$. Hence the hypotheses of Theorem~\ref{rhoinfthm} 
hold, implying that $\mathcal{L}Y$ has good exponential growth.
\end{proof}

Theorem \ref{rhoinfthm} is a generalization of \cite[Theorem 1.3]{FHT5}. As a slight improvement, 
\cite{FHT5} uses the condition that $\rho(L_{Z})<\rho(\frac{L_{Z}}{[L_{Z},L_{Z}]})$, where $L_{Z}$ 
is the homotopy Lie algebra of $Z$, in order to show that $\Omega Z(\rho_{\Omega Z})=\infty$ whereas 
we start directly from the putatively weaker condition. As a larger improvement, \cite[Theorem 1.3]{FHT5} 
holds only for the case when $\Sigma A$ is a sphere, whereas in our case any suspension will do.

To give examples of Theorem~\ref{rhoinfthm} an additional result of F\'{e}lix, Halperin and Thomas 
is needed, which was established within the proof of \cite[Theorem 1.4]{FHT5}. 

\begin{lemma}\label{wedgeexplemma}
Let $M$ and $N$ be simply-connected $CW$-complexes of finite type that are not rationally contractible. If $\rho_{\Omega N}\leq\rho_{\Omega M}$ and $\Omega N(\rho_{\Omega N})=\infty$ then $\rho_{\Omega (M\vee N)}<\rho_{\Omega N}$ and $\Omega (M\vee N)(\rho_{\Omega (M\vee N)})=\infty$.~$\qqed$ 
\end{lemma}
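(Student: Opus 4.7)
The plan is to reduce the statement to elementary real analysis of a single explicit function, using the classical Hilbert series identity for loop spaces of wedges:
\[
\frac{1}{\Omega(M\vee N)(z)} \;=\; \frac{1}{\Omega M(z)} + \frac{1}{\Omega N(z)} - 1.
\]
This identity is valid for simply-connected $M$ and $N$ of finite type; it reflects the fact that $H_{\ast}(\Omega(M\vee N);\mathbb{Q})$ is the coproduct (free product) of $H_{\ast}(\Omega M;\mathbb{Q})$ and $H_{\ast}(\Omega N;\mathbb{Q})$ in the category of connected graded Hopf algebras, and is the starting point of the arguments in~\cite{FHT3}. I would first quote or verify this formula and then read off the growth information from the real-analytic behaviour of its right-hand side.

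Set $\rho := \rho_{\Omega N}$ and define
\[
F(z) \;=\; \frac{1}{\Omega M(z)} + \frac{1}{\Omega N(z)} - 1,
\]
so that $\Omega(M\vee N)(z) = 1/F(z)$ as power series. On $[0,\rho)$ both $\Omega M(z)$ and $\Omega N(z)$ have nonnegative coefficients, so each is continuous and (weakly) increasing, with $\Omega M(0) = \Omega N(0) = 1$; hence $F(0)=1>0$. Because $M$ is simply-connected and rationally nontrivial, $\Omega M$ has nontrivial rational homology in positive degrees, so $\Omega M(z) > 1$ for every $z>0$. Consequently
\[
\lim_{z\to\rho^{-}} F(z) \;=\; \frac{1}{\Omega M(\rho^{-})} + 0 - 1 \;<\; 0,
\]
where I use the hypothesis $\Omega N(\rho)=\infty$ for the middle term, and $\Omega M(\rho^{-})>1$ (finite or not) for the strict inequality; note that $\rho\le\rho_{\Omega M}$ ensures $\Omega M(z)$ is finite on $[0,\rho)$. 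By continuity and the intermediate value theorem there is a smallest $z_{0}\in(0,\rho)$ with $F(z_{0})=0$, and $F(z)>0$ on $[0,z_{0})$.

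To convert this into a statement about radii of convergence, I would invoke Pringsheim's theorem: since $\Omega(M\vee N)(z)$ has nonnegative coefficients, its radius of convergence is itself a singular point. On the disc $|z|<z_{0}$ the function $1/F(z)$ is analytic and coincides with the power series $\Omega(M\vee N)(z)$, while at $z_{0}$ it has a genuine singularity (since $F(z_{0})=0$). Therefore $\rho_{\Omega(M\vee N)} = z_{0} < \rho = \rho_{\Omega N}$, which is the first conclusion. For the second, note that as $z\to z_{0}^{-}$ one has $F(z)\to 0^{+}$ (through positive values), so $\Omega(M\vee N)(z) = 1/F(z)\to +\infty$, giving $\Omega(M\vee N)\bigl(\rho_{\Omega(M\vee N)}\bigr)=\infty$, as required.

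The main obstacle is really just bookkeeping: verifying that the limiting value of $1/\Omega M(z)$ as $z\to\rho^{-}$ is strictly less than $1$. This uses both the hypothesis that $M$ is rationally nontrivial (so $\Omega M(z)>1$ for $z>0$) and the fact that $\Omega M$ is monotone increasing on $[0,\rho)$; the case distinction between $\rho<\rho_{\Omega M}$ (where $\Omega M(\rho)$ is finite and $>1$) and $\rho=\rho_{\Omega M}$ (where $\Omega M(\rho^{-})$ could be $\infty$, making $1/\Omega M(\rho^{-})=0$) must both be handled, but both give the required strict inequality $F(\rho^{-})<0$. Once this sign analysis is in place, Pringsheim's theorem does the rest.
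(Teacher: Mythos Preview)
The paper does not give its own proof of this lemma; it is simply quoted from the proof of \cite[Theorem~1.4]{FHT3} and marked with a~$\qqed$. Your argument is precisely the standard one that appears there: the Hilbert series identity $\Omega(M\vee N)(z)^{-1}=\Omega M(z)^{-1}+\Omega N(z)^{-1}-1$ (coming from $U(L_{M}\amalg L_{N})\cong U(L_{M})\ast U(L_{N})$), followed by a sign analysis of $F(z)$ on the positive real axis and an appeal to Pringsheim. So in spirit your proof matches the source the paper cites.

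One technical point deserves tightening. You assert that $1/F(z)$ is analytic on the whole open disc $|z|<z_{0}$. That is not obviously true: $\Omega M(z)$ or $\Omega N(z)$ could vanish at complex points inside that disc (making $F$ singular there), and even where $F$ is defined it could have complex zeros with $|z|<z_{0}$. What you actually need, and what Pringsheim gives you, is purely local: if the radius of convergence $R$ of $\Omega(M\vee N)(z)$ were strictly less than $z_{0}$, then on the real segment $[0,R)$ we have $\Omega(M\vee N)(z)=1/F(z)$; since $\Omega M$ and $\Omega N$ are analytic and nonvanishing in a small complex neighbourhood of the real point $R$ (their values there exceed~$1$), $F$ is analytic and positive near $R$, so $1/F$ furnishes an analytic continuation past $R$, contradicting Pringsheim. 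This yields $R=z_{0}$, and then $\Omega(M\vee N)(z)=1/F(z)\to\infty$ as $z\to z_{0}^{-}$ gives the second conclusion. With this local reformulation in place your proof is complete.
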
 

Notice, incidentally, that Lemma \ref{wedgeexplemma} can be strengthened by Theorem \ref{rhoinfthm} 
if $M$ is a suspension and $H_{\ast}(\Omega(M\vee N);\mathbb{Q})$ has exponential growth. 

\begin{lemma}
Let $\Sigma M$ and $N$ be simply-connected $CW$-complexes of finite type that are not rationally 
contractible. If ${\rm log}~{\rm index}(H_{\ast}(\Omega(\Sigma M\vee N);\mathbb{Q})<\infty$ and 
$\Omega N(\rho_{\Omega N})=\infty$, then $\mathcal{L}(\Sigma M\vee N)$ has good exponential 
growth and $\rho_{\Omega (\Sigma M\vee N)}<\rho_{\Omega N}$.  
\end{lemma}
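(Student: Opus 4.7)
The plan is to realize $\Sigma M \vee N$ as the middle term of a homotopy cofibration to which Theorem~\ref{rhoinfthm} applies directly. Consider
\[\Sigma M \stackrel{f}{\hookrightarrow} \Sigma M \vee N \stackrel{h}{\longrightarrow} N,\]
where $f$ is the wedge inclusion and $h$ is the pinch map collapsing the $\Sigma M$ summand. The inclusion $N \hookrightarrow \Sigma M \vee N$ is a right homotopy inverse of $h$, so $\Omega h$ admits a right homotopy inverse; in particular $h$ is surjective on rational homotopy and $f$ is inert in the sense introduced just before Theorem~\ref{case1gexpthm}. Since $\Sigma M$ is rationally nontrivial, so is $A := M$, and $Z := N$ is rationally nontrivial by hypothesis. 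Combined with $\Omega N(\rho_{\Omega N}) = \infty$, every hypothesis of Theorem~\ref{rhoinfthm} is met, and we conclude at once that $\mathcal{L}(\Sigma M \vee N)$ has good exponential growth.

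For the strict inequality $\rho_{\Omega(\Sigma M \vee N)} < \rho_{\Omega N}$, I would extract slightly more from the \emph{proof} of Theorem~\ref{rhoinfthm} than from its statement. Applying Theorem~\ref{BTtheorem} to the cofibration above provides a rational equivalence
\[\Omega(\Sigma M \vee N) \;\simeq_{\mathbb{Q}}\; \Omega N \,\times\, \Omega\bigl((\Sigma \Omega N \wedge M) \vee \Sigma M\bigr),\]
so the Hilbert series factor as a product. Both factor series have non-negative coefficients and constant term~$1$, whence $\Omega(\Sigma M \vee N)(z) \geq \Omega\bigl((\Sigma \Omega N \wedge M) \vee \Sigma M\bigr)(z)$ for $z \geq 0$, and therefore
\[\rho_{\Omega(\Sigma M \vee N)} \;\leq\; \rho_{\Omega((\Sigma \Omega N \wedge M) \vee \Sigma M)}.\]
The explicit computation inside the proof of Theorem~\ref{rhoinfthm}---splitting $\Sigma \Omega N \wedge M$ rationally into pieces $S^{n_\alpha} \wedge \Omega N$ and using the formula $\Omega(S^{n_\alpha} \wedge \Omega N)(z) = (1 - z^{n_\alpha - 1}\Omega N(z))^{-1}$ together with $\Omega N(\rho_{\Omega N}) = \infty$---yields $\rho_{\Omega((\Sigma \Omega N \wedge M) \vee \Sigma M)} < \rho_{\Omega N}$, from which the claimed strict inequality is immediate.

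There is no serious obstacle; the lemma is essentially a corollary of Theorem~\ref{rhoinfthm} specialized to a split cofibration. The only care required is to identify the correct cofibration, to note that inertness comes for free from the existence of a section of $h$, and to read the radius inequality off from the Hilbert-series calculation inside the proof of Theorem~\ref{rhoinfthm} rather than from its stated conclusion.
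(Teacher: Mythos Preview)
Your proof is correct and follows exactly the same approach as the paper: apply Theorem~\ref{rhoinfthm} to the split cofibration $\Sigma M\hookrightarrow\Sigma M\vee N\to N$, noting that the section of $h$ makes $f$ inert. The paper's proof is terser, simply saying the lemma ``follows from Theorem~\ref{rhoinfthm}'', whereas you correctly and explicitly extract the radius inequality from the strong-inertness step inside that theorem's proof (equivalently, from the relation $\rho_V=e^{-\mathrm{log\,index}(V)}$).
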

\begin{proof}
Consider the homotopy cofibration $\Sigma M \stackrel{i_1}{\rightarrow} \Sigma M\vee N\stackrel{q_2}{\rightarrow} N$, where $i_1$ is the inclusion of the first factor and $q_2$ is the projection onto the second factor. Then $i_1$ is inert because $\Omega q_{2}$ has a right homotopy inverse.  By hypothesis, 
${\rm log}~{\rm index}(H_{\ast}(\Omega(\Sigma M\vee N);\mathbb{Q})<\infty$ 
and $\Omega N(\rho_{\Omega N})=\infty$, so Theorem \ref{rhoinfthm} implies that $i_{1}$ is strongly inert and $\mathcal{L}(\Sigma M\vee N)$ has good exponential growth. 

Since $i_{1}$ is strongly inert, 
${\rm log}~{\rm index}(\pi_{\ast}(N))<{\rm log}~{\rm index}(\pi_{\ast}(\Sigma M\vee N))$. 
By~(\ref{logcompare}), this is equivalent to 
${\rm log}~{\rm index}(H_{\ast}(\Omega N;\mathbb{Q}))< 
     \mbox{${\rm log}~{\rm index}(H_{\ast}(\Omega(\Sigma M\vee N);\mathbb{Q}))$}$. 
By definition, $\rho_{\Omega X}=e^{-{\rm log}~{\rm index}(H_{\ast}(\Omega X;\mathbb{Q}))}$, 
so we obtain $\rho_{\Omega (\Sigma M\vee N)}<\rho_{\Omega N}$. 
\end{proof}

Suppose that there are homotopy cofibrations $\Sigma A\stackrel{f}{\rightarrow} X\stackrel{j}{\rightarrow} M$ and $\Sigma A\stackrel{g}{\rightarrow} Y\stackrel{k}{\rightarrow} N$. {\it The generalized connected sum $M\mathop{\conn}\limits_{\Sigma A} N$} over $\Sigma A$, introduced in \cite[Section 8]{T}, is defined by the homotopy cofibration
\[
\Sigma A\stackrel{f+g}{\longrightarrow} X\vee Y\longrightarrow M\mathop{\conn}\limits_{\Sigma A} N.
\]

\begin{theorem}\label{sumgoodthm}
Let $M\mathop{\conn}\limits_{\Sigma A} N$ be the generalized connected sum of simply-connected $CW$-complexes of finite type such that ${\rm log}~{\rm index}(H_\ast(\Omega(M\mathop{\conn}\limits_{\Sigma A} N);\mathbb{Q}))<\infty$. Suppose that $f$ and $g$ are inert, and $A$ is not rationally contractible.
If $\rho_{\Omega N}\leq\rho_{\Omega M}$ and $\Omega N(\rho_{\Omega N})=\infty$ then $\mathcal{L}(M\mathop{\conn}\limits_{\Sigma A} N)$ has good exponential growth.
\end{theorem}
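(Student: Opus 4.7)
The natural approach is to apply Theorem~\ref{rhoinfthm} to the defining cofibration
\[\Sigma A \stackrel{f+g}{\longrightarrow} X \vee Y \stackrel{h}{\longrightarrow} M\mathop{\conn}\limits_{\Sigma A} N.\]
This requires verifying: (a) both $A$ and $M\mathop{\conn}\limits_{\Sigma A} N$ are not rationally contractible, (b) the map $f+g$ is inert, and (c) $\Omega(M\mathop{\conn}\limits_{\Sigma A} N)\bigl(\rho_{\Omega(M\mathop{\conn}\limits_{\Sigma A} N)}\bigr)=\infty$. For~(a), $A$ is covered by hypothesis, and for $M\mathop{\conn}\limits_{\Sigma A} N$ one exhibits $M$ (hence a nonzero rational homology class) as a rational retract via the natural collapse map $p_M\colon M\mathop{\conn}\limits_{\Sigma A}N\to M$ arising from the projection $X\vee Y\to X$.

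The main tool for (b) and (c) is the pinch map $\pi\colon M\mathop{\conn}\limits_{\Sigma A}N\to M\vee N$ induced from the comultiplication $\nabla\colon\Sigma A\to\Sigma A\vee\Sigma A$ via the commutative diagram of cofibrations whose left column is $\nabla$ and middle column is the identity on $X\vee Y$. By construction $\pi\circ h = q_X\vee q_Y$. Since $f$ and $g$ are inert, $\Omega q_X$ and $\Omega q_Y$ admit rational right homotopy inverses $s_M\colon \Omega M\to\Omega X$ and $s_N\colon\Omega N\to\Omega Y$. Using the rational identification of $\pi_\ast(\Omega(M\vee N))\otimes\mathbb{Q}$ as the free product $L_M*L_N$ of graded Lie algebras (and its standard realization on loop spaces), these sections assemble into a rational section of $\Omega(q_X\vee q_Y)$; composing with $\Omega h$ yields a rational H-space section $\tau$ of $\Omega\pi$. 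Consequently there is a rational H-space splitting
\[\Omega(M\mathop{\conn}\limits_{\Sigma A}N)\simeq_{\mathbb{Q}}\Omega(M\vee N)\times F.\]
The resulting Hilbert-series factorization, combined with Lemma~\ref{wedgeexplemma} giving $\Omega(M\vee N)(\rho_{\Omega(M\vee N)})=\infty$, forces $\rho_{\Omega(M\mathop{\conn}\limits_{\Sigma A}N)}\leq\rho_{\Omega(M\vee N)}$ and the divergence required for~(c).

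The main obstacle is~(b), the inertness of $f+g$. The plan is to build a rational right inverse of $\Omega h$ by combining the partial H-space section $\Omega M\times\Omega N\to\Omega(M\mathop{\conn}\limits_{\Sigma A}N)$ constructed from $s_M,s_N$ and loop multiplication (which splits $(\Omega p_M,\Omega p_N)$) with the section $\tau$ of $\Omega\pi$ obtained above. A structural identification of $\pi_\ast(\Omega(M\mathop{\conn}\limits_{\Sigma A}N))\otimes\mathbb{Q}$ as the quotient of the free product $L_X*L_Y$ by the Lie ideal generated by the image of $f+g$---furnished by the analysis of general connected sums in~\cite[Section~8]{T}---shows that these two pieces exhaust the rational image of $\Omega h$, yielding the section. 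Theorem~\ref{rhoinfthm} then implies that $\mathcal{L}(M\mathop{\conn}\limits_{\Sigma A}N)$ has good exponential growth.
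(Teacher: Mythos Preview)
There is a fundamental misapplication of Theorem~\ref{rhoinfthm}. That theorem takes a cofibration $\Sigma A\to Y\to Z$ and concludes that $\mathcal{L}Y$, the free loop space of the \emph{middle} term, has good exponential growth. You apply it to the defining cofibration
\[\Sigma A \xrightarrow{f+g} X\vee Y \xrightarrow{h} M\mathop{\conn}\limits_{\Sigma A} N,\]
in which $M\mathop{\conn}\limits_{\Sigma A} N$ is the \emph{cofibre} $Z$. Even granting your conditions (a)--(c), the conclusion would be that $\mathcal{L}(X\vee Y)$ has good exponential growth, which is not what is asked.

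Ironically, you have already built the correct ingredient. Your pinch map $\pi\colon M\mathop{\conn}\limits_{\Sigma A} N\to M\vee N$ sits in a homotopy cofibration
\[\Sigma A \xrightarrow{\ \mathfrak{f}\ } M\mathop{\conn}\limits_{\Sigma A} N \xrightarrow{\ \pi\ } M\vee N,\]
obtained from the same cofibration diagram you wrote (the comultiplication column has cofibre $\Sigma A$). Now $M\mathop{\conn}\limits_{\Sigma A} N$ is the middle term. The rational section $\tau$ of $\Omega\pi$ that you constructed from $s_M$ and $s_N$ shows $\mathfrak{f}$ is inert, and Lemma~\ref{wedgeexplemma} gives $\Omega(M\vee N)(\rho_{\Omega(M\vee N)})=\infty$. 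Theorem~\ref{rhoinfthm} then applies directly and yields the result. This is exactly the route taken in the paper. Your detour through the inertness of $f+g$ and the divergence condition on $\Omega(M\mathop{\conn}\limits_{\Sigma A} N)$ is unnecessary, and the argument you sketch for the inertness of $f+g$ (via a Lie-algebra quotient identification attributed to \cite{T}) is both vague and not needed.
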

\begin{proof} 
By definition, $f+g$ is the composite 
\(\nameddright{\Sigma A}{\sigma}{\Sigma A\vee\Sigma A}{f\vee g}{X\vee Y}\) 
where $\sigma$ is the comultiplication. From the composition we obtain a homotopy cofibration diagram 
\[\diagram 
      \Sigma A\rto^-{\sigma}\ddouble & \Sigma A\vee\Sigma A\rto\dto^{f\vee g} & \Sigma A\dto^{\mathfrak{f}} \\ 
      \Sigma A\rto^-{f+g} & X\vee Y\rto\dto^{j\vee k} & M\mathop\conn\limits_{\Sigma A} N\dto^{q} \\ 
      & M\vee N\rdouble & M\vee N 
  \enddiagram\] 
that defines the maps $\mathfrak{f}$ and $q$. Intuitively, $\mathfrak{f}$ maps to the ``collar" 
in the connected sum and $q$ collapses it. Since $f$ and $g$ are both inert, $\Omega j$ and 
$\Omega k$ have right rational homotopy inverses. Arguing as in~\cite[Proposition 6.1]{BT2} 
then implies that $\Omega q$ has a right homotopy inverse. (The argument in~\cite{BT2} had 
$\Sigma A$ a sphere, but it works equally well in this more general setting.) Thus 
$\mathfrak{f}$ is inert. Since $\rho_{\Omega N}\leq\rho_{\Omega M}$ and 
$\Omega N(\rho_{\Omega N})=\infty$, Lemma~\ref{wedgeexplemma} implies that 
$\Omega (M\vee N)(\rho_{\Omega (M\vee N)})=\infty$. By hypothesis, $A$ is not rationally 
contractible and 
${\rm log}~{\rm index}(H_\ast(\Omega(M\mathop{\conn}\limits_{\Sigma A} N);\mathbb{Q}))<\infty$. 
Therefore all the hypotheses of Theorem~\ref{rhoinfthm} are satisfied when 
applied to the homotopy cofibration  
\(\nameddright{\Sigma A}{\mathfrak{f}}{M\mathop\conn\limits_{\Sigma A} N}{q}{M\vee N}\), 
implying that $\calL(M\mathop\conn\limits_{\Sigma A} N)$ has good exponential growth. 
\end{proof} 

Theorem~\ref{sumgoodthm} partially generalizes a result of Lambrechts~\cite[Theorem 3]{L2} and its improvement by F\'{e}lix-Halperin-Thomas \cite[Theorem 1.4]{FHT5}. 
They both considered the connected sum 
$M\conn N$ of two $n$-dimensional closed simply-connected manifolds. Lambrechts showed 
that if the rational cohomology of $M$ or $N$ is not generated by a single class then $\calL(M\conn N)$ 
has exponential growth. F\'{e}lix-Thomas-Halperin improved on this by showing that if 
the rational cohomology of $M$ is not generated by a single class while~$N$ is not rationally a sphere, 
then $\calL(M\conn N)$ has good exponential growth. By comparison, in the special case of $\Sigma A=S^{n-1}$ 
and $M$ and $N$ being $n$-dimensional closed simply-connected manifolds, the generalized 
connected sum $M\mathop{\conn}\limits_{S^{n-1}} N$ is the usual connected sum $M\conn N$. If 
both the rational cohomology algebras of $M$ and~$N$ are not generated by a single class 
then the attaching maps $f$ and~$g$ are inert by \cite[Theorem 5.1]{HL}. Since $M\conn N$ 
is a simply-connected finite $CW$-complex, Corollary~\ref{FHTlogindexcor} implies that 
${\rm log}~{\rm index}(H_{\ast}(\Omega(M\conn N);\mathbb{Q}))<\infty$. Therefore Theorem~\ref{sumgoodthm} implies $\calL(M\conn N)$ has good exponential growth.

\section{Torsion growth in homotopy groups} 
\label{sec:local}
This section turns from growth in the rational homology of free loop spaces to growth of 
torsion in homotopy groups. This involves homotopy exponents, mod-$p^{r}$ hyperbolicity  
and Moore's Conjecture. 

Of particular relevance in our case is a wedge $S^{m}\vee S^{n}$ of spheres. By the Hilton-Milnor 
Theorem $\Omega(S^{m}\vee S^{n})$ is homotopy equivalent to an infinite product of loops 
on spheres of arbitrarily large dimension. It is known that the exponent of a sphere increases 
with the dimension. Consequently, $\Omega(S^{m}\vee S^{n})$ is rationally hyperbolic and has 
no exponent at any prime $p$, and therefore satisfies Moore's Conjecture. Boyde~\cite{B} 
went further by showing that $\Omega(S^{m}\vee S^{n})$ is mod-$p^{r}$ hyperbolic for all 
primes~$p$ and all $r\geq 1$. 

In Theorem~\ref{mostpwedge} we will give a new class of spaces that satisfies Moore's 
Conjecture for all but finitely many primes and, for those primes, is mod-$p^{r}$ hyperbolic. 
This requires a preliminary lemma that is a $p$-local approximation to the statement that 
any suspension is rationally a wedge of spheres.

\begin{lemma} 
   \label{primestosusp} 
   Let $X$ be a path-connected finite $CW$-complex of dimension $d$ and connectivity $s$. 
   Let~$p$ be a prime such that $p>\frac{1}{2}(d-s+1)$ and $H_{\ast}(X;\mathbb{Z})$ is $p$-torsion 
   free. Then $\Sigma X$ is $p$-locally homotopy equivalent to a wedge of spheres. 
\end{lemma}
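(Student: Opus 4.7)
The plan is to build $\Sigma X$ skeleton by skeleton and check that each attaching map is $p$-locally null-homotopic. First, equip $X$ with a CW structure having a single $0$-cell and otherwise cells only in dimensions $s+1, s+2, \ldots, d$, which is possible by the connectivity hypothesis. The reduced suspension $\Sigma X$ then has a single $0$-cell and cells in dimensions $s+2, \ldots, d+1$, and its $n$-skeleton coincides with $\Sigma(X^{(n-1)})$. I would argue by induction on $n$ that $(\Sigma X)^{(n)}$ is $p$-locally equivalent to a wedge of spheres with summands in dimensions $[s+2, n]$; the base case $n = s+2$ is immediate, since $(\Sigma X)^{(s+2)}$ is manifestly a wedge of $(s+2)$-spheres.

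For the inductive step, suppose $(\Sigma X)^{(n)}$ is $p$-locally equivalent to $W = \bigvee_j S^{m_j}$. Then $(\Sigma X)^{(n+1)}$ is obtained from $W$ by attaching $(n+1)$-cells via a map $\Sigma\alpha : \bigvee S^n \to W$, which is the suspension of the attaching map of the corresponding $n$-cells of $X$. To extend the induction it suffices to show $\Sigma\alpha$ is $p$-locally null-homotopic, so that the cofibration splits. By the Hilton--Milnor theorem, $\pi_n(W)$ decomposes as $\bigoplus_\beta \pi_n(S^{w_\beta})$ indexed by basic products $\beta$ of the wedge inclusions with weights $w_\beta$. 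Since $\Sigma\alpha$ is a suspension and Whitehead products suspend trivially, the projection of $\Sigma\alpha$ onto any summand corresponding to a basic product of length at least two vanishes; hence $\Sigma\alpha$ lies entirely in the length-one summands $\bigoplus_j \pi_n(S^{m_j})_{(p)}$.

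To conclude, I would apply Serre's theorem: for odd primes $p$, $\pi_n(S^m)_{(p)} = 0$ whenever $m < n$ and $n - m < 2p - 3$. The hypothesis $p > \frac{1}{2}(d-s+1)$ gives $2p \geq d - s + 2$, so in the worst case $n = d$ and $m_j = s+2$ one has $n - m_j = d - s - 2 \leq 2p - 4$, yielding the vanishing of every such group. The residual $m_j = n$ summand, where $\pi_n(S^n) \cong \mathbb{Z}$, is handled by choosing the CW structure on $X$ to be minimal $p$-locally, so that the cellular boundary carries zero degree on each same-dimensional sphere summand. The main obstacle is the Hilton--Milnor step, which demands a careful verification that a suspended map projects trivially onto every basic-product summand of length at least two; this reduces to the identity $\Sigma[\iota,\iota'] = 0$ applied iteratively, together with the functoriality $\Sigma(h \circ k) = \Sigma h \circ \Sigma k$ of suspension under composition.
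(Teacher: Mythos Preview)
Your skeleton-by-skeleton induction, the appeal to Hilton--Milnor, and the use of Serre's bound on the first $p$-torsion in $\pi_\ast(S^m)$ all match the paper's framework. The core step---showing the suspended attaching map is $p$-locally null---is where you diverge, and your version has a genuine gap.

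The problem is your claim that the projection of $\Sigma\alpha$ onto every Hilton--Milnor summand of length $\geq 2$ in $\pi_n(W)$ vanishes. The Hilton--Milnor decomposition lives on $W$, and the class you must analyse is the composite of $\Sigma\alpha$ with the inductively produced $p$-local equivalence $\phi\colon(\Sigma X)^{(n)}\to W$. There is no reason for $\phi$ to be a suspension, so $\phi\circ\Sigma\alpha$ is not in general in the image of the suspension map into $\pi_n(W)$, and the identity $\Sigma[\iota,\iota']=0$ tells you nothing about its Hilton--Milnor components. Concretely, take $W=S^{3}\vee S^{3}\vee S^{5}$ and let $\phi$ be the self-equivalence fixing $\iota_1,\iota_2$ and sending $\iota_3\mapsto\iota_3+[\iota_1,\iota_2]$; then the honest suspension $\Sigma(\iota_3)\colon S^5\to\Sigma(S^2\vee S^2\vee S^4)$ is carried by $\phi$ to a class with a nonzero Whitehead-product component. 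A related issue: your assertion that $\pi_n(S^{m_j})_{(p)}=0$ for $m_j<n$ in the stated range is false, since for $m_j$ even there is a free $\mathbb{Z}$-summand in $\pi_{2m_j-1}(S^{m_j})$; Serre's theorem bounds only the $p$-torsion.

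The paper bypasses both problems in one move. Rather than dissecting the Hilton--Milnor components, it argues that because any suspension is rationally a wedge of spheres, the class $\Sigma f_t\in\pi_t(\Sigma X_{t-1})$ is torsion. Being torsion is invariant under the $p$-local identification with $W$, and then Hilton--Milnor together with the fact that the first $p$-torsion in $\pi_\ast(S^m)$ occurs in degree $m+2p-3$ forces the $p$-torsion of $\pi_t(W)$ to vanish throughout the required range, so the class is null. No separate accounting for Whitehead-product summands, free summands, or a minimal cell structure is needed.
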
 

\begin{proof} 
Take homology with integer coefficients. 
Since $\Sigma X$ is simply-connected and of dimension~$d+1$ it has a homology decomposition, 
which as in~\cite[Chapter 4.H]{H} is a sequence of homotopy cofibrations 
\[\nameddright{M_{t}}{f_{t}}{(\Sigma X)_{t-1}}{}{(\Sigma X)_{t}}\] 
for $2\leq t\leq d+1$ with $(\Sigma X)_{1}$ equal to the basepoint, $(\Sigma X)_{d+1}=\Sigma X$, 
$M_{t}$ is a wedge of 
$t-1$ dimensional spheres and $t$ dimensional Moore spaces, and the attaching map $f_{t}$ 
has the property that it induces the zero map in homology. Notice that as $(f_{t})_{\ast}=0$ 
there is an isomorphism $H_{\ast}((\Sigma X)_{t})\cong H_{\ast}((\Sigma X)_{t-1})\oplus H_{\ast}(\Sigma M_{t})$, 
which iteratively implies that $H_{\ast}((\Sigma X)_{t})$ is a direct summand of $H_{\ast}(\Sigma X)$. 
The assumption that $H_{\ast}(X;\mathbb{Z})$ is $p$-torsion free therefore implies that 
$H_{\ast}((\Sigma X)_{t})$ is $p$-torsion free, and therefore 
$H_{\ast}(\Sigma M_{t})$ is also $p$-torsion free. Hence $H_{\ast}(M_{t})$ is $p$-torsion free. Thus 
the $p$-localization of each $M_{t}$ is a wedge of spheres, say $M_{t}\simeq\bigvee_{i=1}^{k_{t}} S^{t-1}$.  
Therefore, $p$-locally, the homotopy cofibrations in the homology decomposition of $\Sigma X$ are of the form 
\[\nameddright{\bigvee_{i=1}^{k_{t}} S^{t-1}}{f_{t}}{(\Sigma X)_{t-1}}{}{(\Sigma X)_{t}},\] 
which is the usual skeletal filtration of $\Sigma X$, but with the extra property that $(f_{t})_{\ast}=0$ 
for each~$t$.

Localize at $p$. As $X$ is $s$-connected, $\Sigma X$ is $(s+1)$-connected, so each of 
$(\Sigma X)_{1},\ldots,(\Sigma X)_{s+1}$ is contractible and $(\Sigma X)_{s+2}$ is homotopy 
equivalent to a wedge of spheres. Suppose inductively that there is a $p$-local homotopy equivalence 
$(\Sigma X)_{t-1}\simeq\mathop{\bigvee}\limits_{\alpha\in\mathcal{I}} S^{k_{\alpha}}$ for $s+2\leq k_{\alpha}\leq t-1$. 
Rationally, any suspension is homotopy equivalent to a wedge of spheres. Thus $f_{t}$ is 
rationally trivial. This implies that the obstructions to~$f_{t}$ being $p$-locally null homotopic are: 
(i) instances where $f_{t}$ has degree~$p^{r}$ for some $r\geq 0$ on an $S^{t-1}$ summand, 
and (ii) torsion homotopy classes in 
$\pi_{t-1}((\Sigma X)_{t-1})=\pi_{t-1}(\mathop{\bigvee}\limits_{\alpha\in\mathcal{I}} S^{k_{\alpha}})$. 
Since $(f_{t})_{\ast}=0$, (i) cannot occur. For (ii), 
the least nontrivial $p$-torsion class in $\pi_{\ast}(S^{n})$ occurs in dimension $n+2p-3$. 
As this number increases with $n$, the Hilton-Milnor Theorem implies that the least 
nontrivial $p$-torsion class in $\pi_{\ast}(\mathop{\bigvee}\limits_{\alpha\in\mathcal{I}} S^{k_{\alpha}})$ 
occurs in dimension $s+2p-1$. Thus if $t-1<s+2p-1$ then there is no $p$-torsion in 
$\pi_{t-1}(\mathop{\bigvee}\limits_{\alpha\in\mathcal{I}} S^{k_{\alpha}})$. Consequently, $f_{t}$ 
is null homotopic, implying that $(\Sigma X)_{t}$ is $p$-locally homotopy equivalent to a wedge of spheres. 
By induction, $\Sigma X$ will be $p$-locally homotopy equivalent to a wedge of spheres provided 
that $d<s+2p-1$, or equivalently, provided that $p>\frac{1}{2}(d-s+1)$. 
\end{proof} 

If $X$ is a path-connected finite $CW$-complex of dimension $d$ and connectivity $s$, 
let $\calP(X)$ be the set of primes $q$ such that $q\leq\frac{1}{2}(d-s+1)$ or 
$H_{\ast}(X;\mathbb{Z})$ has $q$-torsion. Note that the finiteness condition on $X$ implies that 
$\mathcal{P}(X)$ is a finite set of primes. Lemma~\ref{primestosusp} implies that if we 
localize away from $\calP(X)$ then~$\Sigma X$ is homotopy equivalent to a wedge of spheres.  

\begin{theorem} 
   \label{mostpwedge} 
   Let 
   \(\nameddright{\Sigma A}{f}{Y}{h}{Z}\) 
   be a homotopy cofibration of simply-connected finite $CW$-complexes such that 
   $A$ and $Z$ are not rationally contractible. If $\Omega h$ has a right homotopy inverse,   
   then localized away from $\calP=\calP(A)\cup\calP(Z)$ there is a 
   retraction of $\Omega(S^{m}\vee S^{n})$ off $\Omega Y$ for some $m,n\geq 2$. 
\end{theorem}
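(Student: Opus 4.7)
The plan is to combine Theorem~\ref{BTtheorem} with Lemma~\ref{primestosusp} to reduce the problem to a sphere-retract question inside the BT fiber. First I would apply Theorem~\ref{BTtheorem}: since $\Omega h$ has a right homotopy inverse, there is an integral equivalence $\Omega Y\simeq\Omega Z\times\Omega W$ with $W=(\Omega Z\wedge\Sigma A)\vee\Sigma A$. Hence $\Omega W$ retracts off $\Omega Y$, and it suffices to construct a retraction of $\Omega(S^{m}\vee S^{n})$ off $\Omega W$ localized away from $\calP$; I will do this by producing a space-level map $\psi\colon S^{m}\vee S^{n}\to W$ together with a left inverse $r\colon W\to S^{m}\vee S^{n}$, since then looping gives the desired retract.

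Take $m\geq 2$ to be the dimension of the bottom cell of $\Sigma A$ and $\ell\geq 2$ to be the bottom-cell dimension of $Z$ (both are well defined since $A$ and $Z$ are not rationally contractible), and set $n=\ell+m-1$. Let $\iota_{m}\colon S^{m}\to\Sigma A$ denote the bottom-cell inclusion, and let $\tilde\phi\colon S^{\ell-1}\to\Omega Z$ be the adjoint of a map $\phi\colon S^{\ell}\to Z$ representing a generator of $H_{\ell}(Z;\mathbb{Q})$ (which exists since $Z$ is not rationally contractible). Define $\psi$ to send the first wedge factor into $W$ via $S^{m}\xrightarrow{\iota_{m}}\Sigma A\hookrightarrow W$, and the second via the composite $S^{n}=S^{\ell-1}\wedge S^{m}\xrightarrow{\tilde\phi\wedge\iota_{m}}\Omega Z\wedge\Sigma A\hookrightarrow W$.

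To produce $r$, I would work wedge-summand by wedge-summand. On the $\Sigma A$ summand of $W$, Lemma~\ref{primestosusp} gives, away from $\calP(A)$, a splitting $\Sigma A\simeq\bigvee_{\alpha}S^{m_{\alpha}}$ with $S^{m}$ as the bottom summand, yielding a retraction $\Sigma A\to S^{m}$. This same splitting gives $\Omega Z\wedge\Sigma A\simeq\bigvee_{\alpha}\Sigma^{m_{\alpha}}\Omega Z$, so the task on the second wedge summand reduces to exhibiting the bottom cell $S^{n}=\Sigma^{m}S^{\ell-1}$ as a retract of $\Sigma^{m}\Omega Z$. This last step is the hard one and is where the localization away from $\calP(Z)$ is used: my expectation is that the wedge-of-spheres decomposition $\Sigma Z\simeq\bigvee S^{n_{\beta}}$ supplied by Lemma~\ref{primestosusp}, combined with a James-type analysis of $\Sigma\Omega Z$, forces $S^{n}$ to split off $\Sigma^{m}\Omega Z$ in the $\calP$-local category. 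With this in hand the two pieces assemble into $r$, the identity $r\circ\psi\simeq\mathrm{id}$ follows from a straightforward check on each wedge factor, and looping gives the required retraction of $\Omega(S^{m}\vee S^{n})$ off $\Omega Y$.
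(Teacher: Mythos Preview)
Your overall architecture matches the paper's proof: apply Theorem~\ref{BTtheorem}, retract $S^{m}$ off $\Sigma A$ using Lemma~\ref{primestosusp}, and then retract a bottom sphere $S^{n}$ off $\Omega Z\wedge S^{m}\simeq\Sigma^{m}\Omega Z$. The values of $m$ and $n$ you identify are the same ones the paper uses, and assembling the two retractions wedge-summand by wedge-summand is exactly what the paper does.

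The genuine gap is the step you yourself flag as ``the hard one'' and then defer to ``a James-type analysis of $\Sigma\Omega Z$.'' That hint points in the wrong direction: the James splitting $\Sigma\Omega\Sigma X\simeq\bigvee_{k\geq 1}\Sigma X^{\wedge k}$ needs $Z$ to be a suspension, which it is not in general, and there is no comparable decomposition of $\Sigma\Omega Z$ for arbitrary $Z$. The paper's device is instead the \emph{evaluation map}. One suspends $ev\colon\Sigma\Omega Z\to Z$ to $\Sigma^{m-1}ev\colon\Sigma^{m}\Omega Z\to\Sigma^{m-1}Z$; since $m\geq 2$ the target is a suspension of $Z$ and hence, away from $\calP(Z)$, a wedge of spheres by Lemma~\ref{primestosusp}, with $S^{n}$ the bottom summand. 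The Serre exact sequence for the fibration $\Sigma\Omega Z\wedge\Omega Z\to\Sigma\Omega Z\xrightarrow{ev}Z$ shows $ev$ is an isomorphism in the least nontrivial homology degree, so $\Sigma^{m-1}ev$ is an isomorphism in degree $n$. A Hurewicz map $g\colon\bigvee S^{n}\to\Sigma^{m}\Omega Z$ realizing the degree-$n$ homology then has $(\Sigma^{m-1}ev)\circ g$ an isomorphism onto the bottom summands of $\Sigma^{m-1}Z$; pinching off the higher spheres gives the left inverse, and hence $S^{n}$ retracts off $\Sigma^{m}\Omega Z$. So the missing ingredient is the evaluation map, not a James-type splitting.
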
 

\begin{proof} 
Since $\Omega h$ has a right homotopy inverse, by Theorem~\ref{BTtheorem} 
there is a homotopy fibration 
\[\nameddright{(\Omega Z\wedge\Sigma A)\vee\Sigma A}{}{Y}{h}{Z}\] 
and a homotopy equivalence 
\begin{equation} 
  \label{LoopXdecomp} 
  \Omega Y\simeq\Omega Z\times\Omega((\Omega Z\wedge\Sigma A)\vee\Sigma A). 
\end{equation} 
Localize away from $\calP$. By Lemma~\ref{primestosusp}, $\Sigma A$ is homotopy equivalent 
to a wedge of spheres. Since~$\Sigma A$ is not rationally contractible it has at least one sphere 
as a wedge summand. Let $S^{m}$ be a sphere of least dimension in this wedge decomposition. 
Notice that $m\geq 2$ since $\Sigma A$ is simply-connected. At this point we have $S^{m}$ 
retracting off $\Sigma A$ and $\Omega Z\wedge S^{m}$ retracting off $\Omega Z\wedge\Sigma A$. 

Now consider the map  
\[\llnamedright{\Omega Z\wedge S^{m}\simeq\Sigma^{m}\Omega Z}{\Sigma^{m-1} ev}{\Sigma^{m-1} Z}\] 
where $ev$ is the canonical evaluation. As $m\geq 2$, this map makes sense and $\Sigma^{m-1} Z$ 
is a suspension. As we are localized away from $\calP$, by Lemma~\ref{primestosusp} the 
space $\Sigma^{m-1} Z$ is homotopy equivalent to a wedge of spheres. Since $Z$ is not 
rationally contractible it has at least one sphere as a wedge summand. Let $S^{n}$ be a sphere 
of least dimension in this wedge decomposition.  
We claim that this sphere also retracts off $\Omega Z\wedge S^{m}$. If so then 
$S^{n}\vee S^{m}$ retracts off $(\Omega Z\wedge\Sigma A)\vee\Sigma A$ and 
hence~(\ref{LoopXdecomp}) implies that $\Omega(S^{m}\vee S^{n})$ retracts off $\Omega Y$. 

It remains to show that $S^{n}$ retracts off $\Omega Z\wedge S^{m}$. Take homology 
with $\mathbb{Z}_{(\frac{1}{\calP})}$-coefficients. Note that $n$ is the least 
degree for which $\Sigma^{m-1} Z$ has nontrivial homology. Since $Z$ is simply-connected, 
the Serre exact sequence applied to the homotopy fibration 
\(\nameddright{\Sigma\Omega Z\wedge\Omega Z}{}{\Sigma\Omega Z}{ev}{Z}\) 
implies that $ev$ induces an isomorphism in the least nontrivial degree in homology. Therefore 
$\Sigma^{m-1} ev$ induces an isomorphism in degree $n$. On the other hand, the Hurewicz 
Theorem implies that, for some finite number $\ell\geq 1$, there is a map 
\(g\colon \namedright{\bigvee_{i=1}^{\ell} S^n}{}{\Omega Z\wedge S^{m}}\)   
that induces an isomorphism in degree~$n$ homology. Thus $(\Sigma^{m-1} ev)\circ g$ 
induces an isomorphism in degree $n$ homology. Since $\Sigma^{m-1} Z$ is homotopy 
equivalent to a wedge of spheres, this implies that $(\Sigma^{m-1} ev)\circ g$ has a left 
homotopy inverse. Hence there is a retraction of $S^{n}$ off $\Sigma Z\wedge S^{m}$. 
\end{proof} 

\begin{corollary} 
   \label{mostpMoore} 
   With hypotheses as in Theorem~\ref{mostpwedge}, the space $Y$ has the following properties: 
   \begin{letterlist} 
      \item $Y$ is rationally hyperbolic; 
      \item $Y$ has no homotopy exponent at any prime $p\notin\calP$; 
      \item $Y$ is mod-$p^{r}$ hyperbolic for all primes $p\notin\calP$ and all $r\geq 1$.  
   \end{letterlist} 
   Consequently, for all but finitely many primes, $Y$ satisfies Moore's Conjecture and 
   is mod-$p^{r}$ hyperbolic. 
\end{corollary}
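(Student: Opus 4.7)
The plan is to transfer three homotopy-theoretic properties of $\Omega(S^{m}\vee S^{n})$ across the retraction supplied by Theorem~\ref{mostpwedge}. Fix the integers $m,n\geq 2$ produced by that theorem and work, as there, localized away from the finite set $\calP=\calP(A)\cup\calP(Z)$. The essential observation is that a retraction of $\Omega(S^{m}\vee S^{n})$ off $\Omega Y$ exhibits $\pi_{k}(\Omega(S^{m}\vee S^{n}))$ as a direct summand of $\pi_{k}(\Omega Y)$ in every degree $k$, and this splitting is preserved by rationalization and by further $p$-localization at any prime $p\notin\calP$. Each of (a), (b), (c) then reduces to a known property of $\Omega(S^{m}\vee S^{n})$.

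For part (a), Hilton-Milnor exhibits $\Omega(S^{m}\vee S^{n})$ as weakly equivalent to a weak infinite product of loop spaces of spheres whose dimensions tend to infinity, so its rational homotopy forms an infinite dimensional graded vector space. Since $\calP$ is finite, rationalizing the retraction shows $\pi_{*}(Y)\otimes\mathbb{Q}$ is also infinite dimensional, so $Y$ is rationally hyperbolic. For part (b), the classical fact that the $p$-primary exponent of $S^{n}$ is unbounded as the dimension grows combines with Hilton-Milnor to show $\Omega(S^{m}\vee S^{n})$ carries $p$-torsion of arbitrarily large order at every prime, and these classes lift to $\pi_{*}(\Omega Y)_{(p)}$ via the retraction, giving $\exp_{p}(Y)=\infty$ for every $p\notin\calP$. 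Part (c) invokes Boyde's theorem quoted in the introduction to this section: $\Omega(S^{m}\vee S^{n})$ is mod-$p^{r}$ hyperbolic for every prime $p$ and every $r\geq 1$. Since the $\mathbb{Z}/p^{r}$ summands of $\pi_{*}(\Omega(S^{m}\vee S^{n}))$ sit as summands of $\pi_{*}(\Omega Y)$ at primes outside $\calP$, the exponential lower bound on the number of $\mathbb{Z}/p^{r}$ summands transfers to $Y$.

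The substantive work has already been absorbed into Theorem~\ref{mostpwedge} and Boyde's result, so no genuine obstacle remains; the step requiring the most care is simply verifying that the retraction behaves well under rationalization in part (a) and under $p$-localization in parts (b) and (c), both of which follow from the finiteness of $\calP$. The concluding assertion that $Y$ satisfies Moore's Conjecture at all but finitely many primes is then immediate: rational ellipticity fails by (a), and (b) rules out the existence of a $p$-exponent at every prime $p\notin\calP$, so the three conditions of Conjecture~\ref{Moore} are simultaneously false for such primes.
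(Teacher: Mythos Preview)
Your argument is correct and follows exactly the same route as the paper: the paper's proof is the single sentence that parts (a)--(c) hold for $S^{m}\vee S^{n}$ and transfer to $Y$ via the retraction of $\Omega(S^{m}\vee S^{n})$ off $\Omega Y$ supplied by Theorem~\ref{mostpwedge}. You have simply unpacked this sentence, correctly invoking Hilton--Milnor for (a) and (b) and Boyde's theorem for (c).
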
 

\begin{proof} 
Parts~(a) to~(c) follow because they are satisfied by $S^{m}\vee S^{n}$ and 
because $\Omega(S^{m}\vee S^{n})$ retracts off $\Omega Y$. 
\end{proof}

\begin{proof}[Proof of Theorem~\ref{main}~(c)] 
Suppose that $Y$ has dimension $d$ and connectivity $s$. By hypothesis, 
$H_{\ast}(Y;\mathbb{Z})$ is $p$-torsion free, so if $p>\frac{1}{2}(d-s+1)$ then 
$p\notin\mathcal{P}(Y)$. Therefore Corollary~\ref{mostpMoore} implies that 
$Y$ is rationally hyperbolic and $\mathbb{Z}/p^{r}$-hyperbolic for all $r\geq 1$. 
\end{proof}

\begin{example} 
We revisit the class $\mathcal{Y}$ in Section~\ref{sec:caseII}.
Let $Y\in\mathcal{Y}$ and consider the associated homotopy cofibration 
\(\nameddright{\Sigma J}{f}{Y}{h}{Z}\). 
By the hypotheses on $\mathcal{Y}$, the space $J$ is not rationally contractible, and~$Z$ 
has the rational homology of $S^{m}\times S^{n-m}$. By~\cite{BT1}, $\Omega h$ has a 
right rational homotopy inverse. By Lemma~\ref{primestosusp}, 
$\Sigma J$ is homotopy equivalent to a wedge of spheres after localization away from 
$\calP(J)$. Now Theorem~\ref{mostpwedge} applies and so Corollary~\ref{mostpMoore} 
implies that $Y$ is hyperbolic, has no exponent at any prime $p\notin\calP(J)\cup\calP(Z)$, 
and for any such prime~$Y$ is mod-$p^{r}$ hyperbolic for all $r\geq 1$. 
\end{example}

Finally, observe that as well as Moore's Conjecture, Theorem~\ref{mostpwedge} is closely 
linked to the Vigu\'{e}-Poirrier Conjecture. The hypothesis that $\Omega h$ has a right 
homotopy inverse implies that $f$ is inert. 
Since $Y$ is a simply-connected finite $CW$-complex, by Corollary~\ref{FHTlogindexcor},
${\rm log}~{\rm index}(H_\ast(\Omega Y;\mathbb{Q}))<\infty$.
Therefore, if either $f$ is strongly inert or $\Omega Z(\rho_{\Omega Z})=\infty$, then Theorems~\ref{case1gexpthm} 
or~\ref{rhoinfthm} respectively implies that $\calL Y$ has good exponential growth.

\bibliographystyle{amsalpha}

\end{document}